\crefname{hypothesis}{Hypothesis}{Hypotheses}
\definecolor{colorJRblue}{rgb}{0.,0.,1.}
\definecolor{colorJRred}{rgb}{1.,0.,0.}
\def\R{\mathbb{R}}
\def\C{\mathbb{C}}
\def\Z{\mathbb{Z}}
\def\Lspace{{\sf L}}
\def\Hspace{{\sf H}}
\def\crit{\mathrm{crit}}
\def\per{{\rm per}}
\DeclareMathOperator{\range}{\mathrm{range}}
\def\E{E}
\def\rms{{\rm s}}
\def\tlam{{\tilde\lambda}}
\def\kap{{\tilde\kappa}}
\def\nl{{\rm nl}}
\def\crit{{\rm c}}
\def\st{{\rm st}}
\def\hom{{\rm hom}}
\def\hK{k_0}
\def\hq{q_2}
\def\tq{q_0}
\def\rmi{\mathrm{i}}
\def\rmd{\mathrm{d}}
\def\rme{\mathrm{e}}
\def\calO{\mathcal{O}}
\def\calL{\mathcal{L}}
\def\calT{\mathcal{T}}
\def\calR{\mathcal{R}}
\def\calZ{\mathcal{Z}}
\def\calE{\mathcal{E}}
\def\calB{\mathcal{B}}
\def\Id{{\rm Id}}
\def\A{L}
\def\M{M}
\def\B{B}
\def\Q{Q}
\def\K{K}
\def\x{{\bf x}}
\def\kc{{{\bf k}_{\rm c}}}
\def\kcsq{{{\bf k}_{\rm c}^2}}
\def\ta{{b}}
\def\tQ{\Q_0}
\def\hQ{\Q_2}
\def\bbeta{{\beta\beta}}
\def \talpha {{\tilde\alpha}}
\def \calpha {{\check\alpha}}
\def\Re{\mathrm{Re}}
\def \sgn {\mathrm{sgn}}
\def \dif {\mathrm{d}}
\def \diag {\mathrm{diag}}
\def\zz{{\rm zz}}
\def\eh{{\rm eh}}
\begin{document}

\title{The impact of advection on large-wavelength stability of stripes near planar Turing instabilities\thanks{December 24, 2019. \funding{This work was funded by the China Scholarship Council, and Degree completion stipend from University of Bremen.}}}

\author{Jichen~Yang\thanks{University of Bremen, Faculty 3 -- Mathematics, Bibliothekstrasse 5, 28359  Bremen, Germany (\email{jyang@uni-bremen.de}).} \and Jens~D.~M.~Rademacher\thanks{University of Bremen, Faculty 3 -- Mathematics, Bibliothekstrasse 5, 28359  Bremen, Germany (\email{jdmr@uni-bremen.de}).} \and Eric~Siero\thanks{Carl von Ossietzky University of Oldenburg, Institute for Mathematics, Carl von Ossietzky Str. 9-11, 26111 Oldenburg, Germany (\email{eric.siero@uni-oldenburg.de}).}}

\maketitle

\begin{abstract}
It is well known that for reaction-diffusion systems with differential isotropic diffusions, a Turing instability yields striped solutions. In this paper we study the impact of weak anisotropy by directional advection on such solutions, and the role of quadratic terms. We focus on the generic form of planar reaction-diffusion systems with two components near such a bifurcation. Using Lyapunov-Schmidt reduction and Floquet-Bloch decomposition we derive a rigorous parameter expansion for existence and stability against large wavelength perturbations. This provides detailed formulae for the loci of bifurcations and so-called Eckhaus as well as zigzag stability boundaries under the influence of the advection and quadratic terms. In particular, while destabilisation of the background state is through modes perpendicular to the advection (Squire-theorem), we show that stripes can bifurcate zigzag unstably. We illustrate these results numerically by an example. Finally, we show numerical computations of these stability boundaries in the extended Klausmeier model for vegetation patterns and show stripes bifurcate stably in the presence of advection.
\end{abstract}

\begin{keywords}
  Reaction-diffusion systems, striped solutions, Turing instability, advection, large-wavelength stability, Eckhaus, zigzag
\end{keywords}

\begin{AMS}
  35B10, 35B35, 35B36, 35K57
\end{AMS}

\section{Introduction}\label{s:intro}

It is well known that from the ubiquitous spatially isotropic Turing instabilities various patterned solutions bifurcate. In one dimension the basic spatially periodic ones are wavetrains, which trivially extend to stripe solutions in two space dimensions, where they are in competition with hexagonal and square shaped states, e.g., \cite{Hoyle2006}. The question arises, which pattern is selected at onset of the instability. It is well known that in the isotropic situation stripes are unstable with respect to modes on the hexagonal lattice near onset in the presence of a generic quadratic term in the nonlinearity. In \cite{Gowda2014} this has been discussed in the context of vegetation patterns. In contrast, it has been found in \cite{Siero2015} that in a sloped terrain, the vegetation patterns, i.e. the stripes, are stable at onset. Here the slope is modelled by an advective term in the water component, which breaks the spatial isotropy. Indeed, from a symmetry perspective for weakly anisotropic perturbations and on the hexagonal lattice this has been predicted already in \cite{Callahan2000}. The destabilising effect of advection terms on homogeneous states have been broadly studied in the context of differential flows, e.g., \cite{Rovinsky1992,Merkin2000,Carballido-Landeira2012} and also appear in ecology, e.g., \cite{Wang2009,Cangelosi2015,Bennett2019}, where we believe our results can also be useful.

In this paper we study the stability of stripes in reaction-diffusion systems for weak anisotropy regarding large wavelength perturbations; a forthcoming paper will consider hexagonal modes. We are particularly interested in refining the results of \cite{Siero2015} which indicate a stabilising effect of advection for stripes aligned with this. In particular, it was proven that the onset of instability of the homogeneous state, i.e., the nature of the Turing-Hopf instability, is due to one-dimensional modes (a `Squire'-theorem). However we shall explain below that this does not necessarily imply stability of bifurcating stripes.

Large wavelength modes, also called sideband modes, are well understood in one space dimension through the Ginzburg-Landau formalism, e.g., \cite{Hoyle2006,Doelman2009,Schneider2017}, most directly from the fourth order Swift-Hohenberg equation. Here only sideband modes are relevant and the so-called Eckhaus region describes the stability boundary, which is crossed when stretching or compressing the wavetrains too much. In two space dimensions, instabilities along the stripe that is formed by trivially extending the one-dimensional wavetrain, become additionally relevant. The large wavelength modes of this type give rise to the so-called zigzag stability boundary. It is well known that for the Swift-Hohenberg this is crossed when wavetrains are stretched by any amount in the isotropic case, but  detailed rigorous studies for reaction diffusion systems (even without advection) seem scarce; in \cite{Pena2003} a reduction to nearly hexagonal lattices is applied. Indeed, zigzag stability can also be studied with the aid of a modulation equation, the so-called Newell-Whitehead-Segal equation, again most directly linked with the Swift-Hohenberg equation \cite{Hoyle2006}. 

\medskip
In this paper we take a direct approach and first study the existence of stripes with detailed expansions by Lyapunov-Schmidt reduction and then analyse the large wavelength stability via Floquet-Bloch decomposition in the spirit of \cite{Mielke1995,Rademacher2007,Doelman2009}. The advantages of this approach are that it is fully rigorous and that we gain direct access to all relevant characteristic quantities in terms of the advection, the quadratic terms, stretching and compressing. A particular motivation is to augment the discussion of stripe stability in \cite{Siero2015} for a variant of the Klausmeier model, where small advection and zigzag modes were not considered in any detail, cf.\ \S\ref{s:Klausmeier}.

The approach applies to arbitrary number of components, but the parameter spaces and determination of signs of relevant characteristics become analytically less accessible for more than two components. Hence we restrict our attention to this case.

\medskip
Upon changing coordinates, the generic form of such a system up to cubic nonlinearity reads
\begin{align}\label{e:RDS}
u_t = D\Delta u + \A u + \calpha \M u + \beta \B u_x + \Q[u,u] + \K[u,u,u], \; \x\in\R^2
\end{align}
with multilinear functions $\Q, \K$ and diagonal diffusion matrix $D>0$; higher order nonlinear terms can be added without change to our results near bifurcation. We assume that for $\calpha=\beta=0$ the zero steady state is at a Turing instability with wavenumber $\kc$, cf.\ Definition \ref{def:Turing} below, and that $\calpha$ moves the spectrum through the origin. The isotropy is broken for $\beta\neq 0$, and we assume differential advection 
\[
\B=\B(c)=\begin{pmatrix}1+c & 0\\ 0 & c\end{pmatrix}, \; c\in\R,
\]
which can be realised under the natural assumption of uni-directional anisotropy. Note that $\beta c\partial_x$ appears in both equations as a comoving frame in the $x$-direction.

\medskip
Our main results may be summarised as follows. Here the parameters are $\mu=(\alpha,\beta,\kap)$ where $\alpha=\lambda_M\calpha$ for certain $\lambda_M\neq0$ and $\kap=\kappa-\kc$ is the deviation of the stripes' nonlinear wavenumber from $\kc$, i.e., the stripes' spatial period is $2\pi/\kappa$. Throughout we consider $|\mu|\ll 1$, and consider stripes  $U_\rms(x;\mu)$ that are constant in $y$ with amplitude parameter $A=\|\widehat{U_\rms}(1;\mu)\|$ the norm of the first Fourier mode.

\medskip
\paragraph{Existence of stripes (Theorem~\ref{t:bif})} 
The existence of striped solutions $U_\rms(x;\mu)$ to \eqref{e:RDS} with small amplitude $A$ near the onset of the Turing instability is equivalent to solving an algebraic equation
\[
\alpha + \rho_\beta \beta^2 + \rho_\kap \kap^2 + \rho_\nl A^2=0,
\]
where $\rho_\beta,\rho_\kap$ are determined by the linearisation in $u=0$, and $\rho_\nl$ involves the nonlinear terms. We have $\rho_\beta>0$, $\rho_\kap<0$ so that the bifurcation loci form a hyperbolic paraboloid, and in the supercritical case $\rho_\nl<0$ the corresponding amplitudes $A$ follow a family of supercritical pitchfork bifurcations, cf.\ Fig.~\ref{f:sidebandintro} (dashed curves). We provide an expansion of $U_\rms(x;\mu)$ in the parameters and the velocity parameter $c$ is a function of $\mu$ that is to leading order affine in $\alpha, \kap$. For direction of the stripe motion $\beta c$ we have $\sgn(c) = - \sgn(a_1)$, i.e., in case the first component is an inhibitor the motion is with $\beta$, and it is opposite $\beta$ if it is an activator.

\begin{figure}[t]
\centering
\subfloat[]{\includegraphics[width=0.45\linewidth]{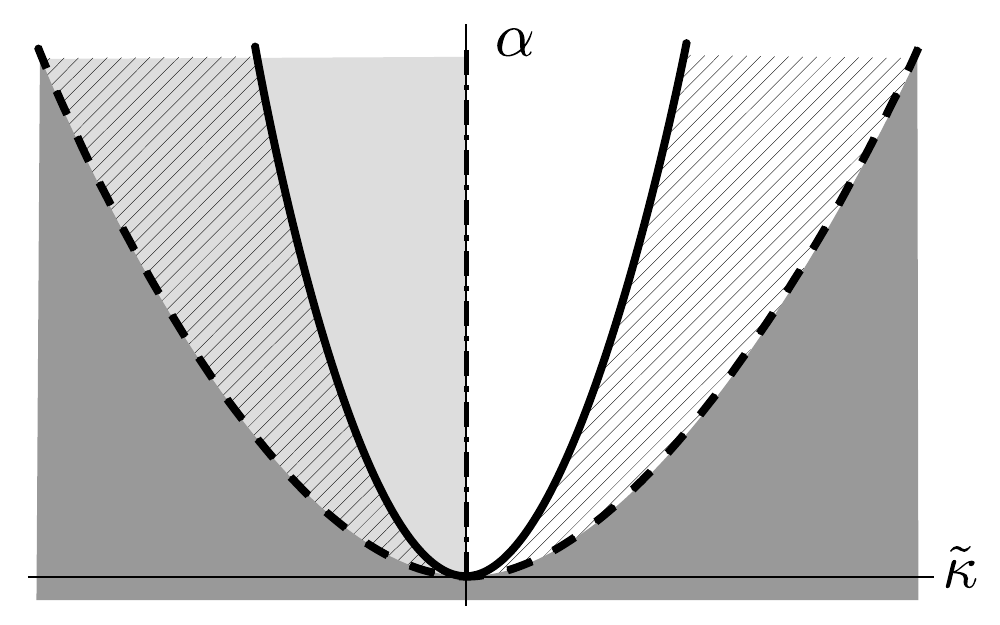}\label{f:sidebandintrobet0}}
\hfil
\subfloat[]{\includegraphics[width=0.45\linewidth]{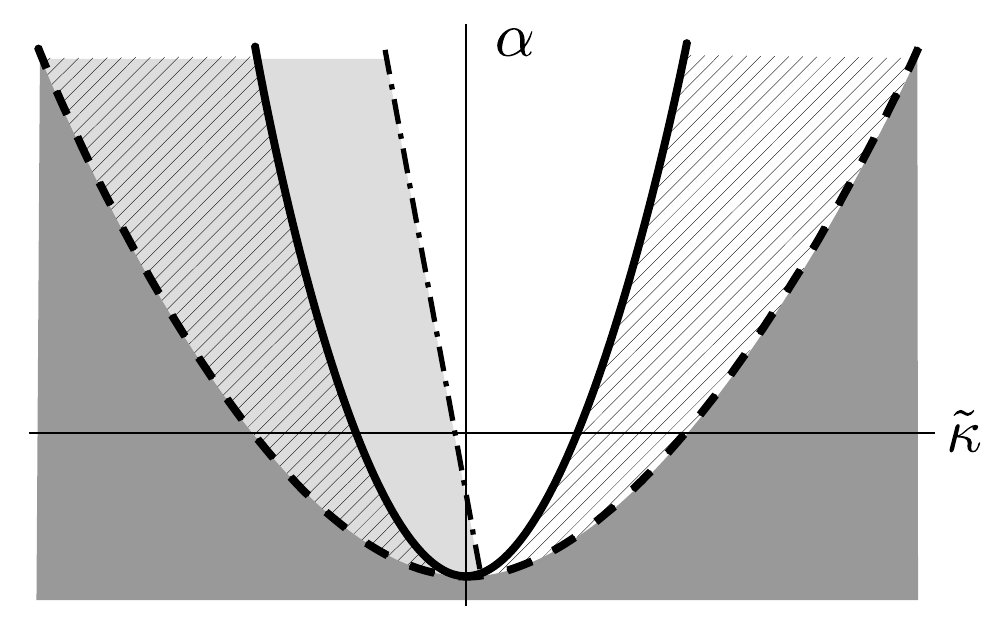}\label{f:sidebandintrobetn0}}
\caption{We plot sketches of the leading order existence and stability boundaries near the Turing bifurcation point at the origin in (a). Stripes exist in the complement of the dark grey regions. Hatched region: Eckhaus unstable. Light grey region: zigzag unstable. (a) $\beta=0$, $\M=\Id$, $\Q=0$, (b) sample for $\beta\neq0$, $\M\neq \Id$, $\Q\neq 0$. Note in (b) the existence and Eckhaus boundaries are shifted downwards, the zigzag boundary is tilted and the attachment point to the existence shifted.
}
\label{f:sidebandintro}
\end{figure}

\medskip
Having established the existence of stripes, we obtain the following results concerning large-wavelength stability. The stability or instability of stripes against the large wavelength perturbations parallel and orthogonal to the stripes, is referred to as {\em zigzag} and {\em Eckhaus} in/stability, respectively. Our results on these two types of instability at the onset of Turing bifurcation may be summarised as follows, cf. Theorem \ref{t:zigzag}\,\&\,\ref{t:Eckhaus}. 

\medskip
\paragraph{Zigzag instability}

We determine the leading order curvature of the spectrum for modes parallel to the stripes as
\[
\kc \rho_\kap\kap + \rho_{\calpha} \alpha + \rho_\bbeta \beta^2,
\]
which means zigzag instability for a positive value. For $\rho_\calpha=0$ the leading order zigzag boundary is independent of $\alpha$ as in the aforementioned isotropic case of Swift-Hohenberg. It turns out that $\rho_\calpha=a+b$, where $a=0$ if $M=\Id$ and $b=0$ if $Q=0$, which highlights the impact of non-trivial $M$ and the quadratic term. The sign of $\rho_\bbeta$ determines whether $\beta$ has a stabilising or destabilising effect, and we determine this for `small' $Q$. It turns out that if the first component is the inhibitor, $a_1<0$, then $\rho_\bbeta<0$. However, the different combinations of signs allow to move and tilt the zigzag boundary, cf. Fig.~\ref{f:zzfull}. In particular, it is possible that stripes are zigzag unstable at onset, which shows a limitation of the `Squire theorem' of \cite{Siero2015}, i.e., the fact that a {\em homogeneous} steady state is always destabilised by modes that are constant in the direction perpendicular to the advection. 

\medskip
\paragraph{Eckhaus instability}
To leading order the curvature of the spectrum for modes orthogonal to the stripe has the sign of
\[
-(\talpha + 3\rho_\kap \kap^2),
\] 
where $\talpha=\alpha+\rho_\beta \beta^2$ is the deviation from the bifurcation loci. Hence, in terms of $\talpha$ the leading order curvature is independent of the advection $\beta$, and just according to the well-known Eckhaus boundary as a function of $\kap$, cf.\ Fig.~\ref{f:sidebandintro} (solid curves). Thus, in contrast to the zigzag instability, relative to the bifurcation loci there is no leading order impact of the advection on this large wavelength stability.
Nevertheless, for fixed unfolding parameter $\alpha$ the interval of stable $\kap$ is larger, i.e., stripes are more resilient to stretching/compressing compared to the isotropic case.

\bigskip
This paper provides a first step to understand analytically and in detail the stability of stripes in \eqref{e:RDS} under the influence of advection on the plane $\x\in\R^2$. As mentioned, the natural next step is to study stability on lattices, in particular (near) hexagonal lattices as, e.g., in \cite{Callahan2000,Gowda2014}. This is the subject of a forthcoming paper, where we give detailed expansions of the different stability boundaries that arise under influence of the advection and quadratic terms.

\medskip
This paper is organised as follows: In \S\ref{s:Turing} we discuss linear stability of the homogeneous state near the Turing instability as a preparation for the analysis of stripes. The existence of stripes is studied in \S\ref{s:bif}, and in \S\ref{s:stability} we study the large wavelength in/stabilities, i.e., zigzag and Eckhaus in/stabilities. In \S\ref{s:example}, we illustrate these results by a concrete example of the form \eqref{e:RDS} and in \S\ref{s:Klausmeier}, we study the large wavelength instabilities numerically for the extended Klausmeier model that was used in \cite{Siero2015}.

\section{Turing instability}\label{s:Turing}

The linearisation of \eqref{e:RDS} in $u_{\rm hom}=0$ is 
\[
\calL=D\Delta+\A + \calpha \M + \beta \B\partial_x,
\]
whose spectrum is most easily studied via the Fourier transform
\[
\hat{\calL}(k,\ell) = -(k^2+\ell^2)D+\A + \calpha \M + \rmi k\beta \B,
\]
with Fourier-wavenumbers $k$ in $x$-direction and $\ell$ in $y$-direction. It is well known, e.g., \cite{Sandstede2002},  that in the common function spaces such as $\Lspace^2(\R^2)$ the spectrum $\Sigma(\calL)$ of $\calL$ equals that of $\hat\calL$ and is the set of roots of the (linear) dispersion relation
\begin{align}\label{e:disp}
d(\lambda,k,\ell) = \det(\hat\calL(k,\ell) -\lambda\Id).
\end{align}
Let $S_{\kc}\subset\R^2$ be the circle of radius $\kc$.

\begin{definition}\label{def:Turing}
We say that $\calpha=\beta=0$ is a (non-degenerate) Turing instability point for $u_{\rm hom}$ in \eqref{e:RDS} with wavelength $\kc$ if 
\begin{itemize}
\item[(1)]  $\A$ has strictly stable spectrum $\Sigma(\A)\subset\{\lambda\in\C : \Re(\lambda)<0\}$,
\item[(2)] The spectrum of $\calL$ is critical for wavevectors $(k,\ell)$ of length $\kc>0$: 
\[
d(\lambda,k,\ell) =0 \ \&\ \Re(\lambda)\geq 0\quad \Leftrightarrow\quad \lambda=0,\ (k,\ell)\in S_{\kc}
\]
which in particular means $\Sigma(\calL)\cap \{z\in \C: \Re(z)\geq 0\} = \{0\}$,
\item[(3)] $\partial_\lambda d\neq 0$ at $\lambda=0$ and $(k_\crit,\ell_\crit)\in S_{\kc}$. We denote the unique continuation of these solutions to \eqref{e:disp} by $\lambda_\crit(k,\ell,\mu)$, i.e., $(k,\ell)$ in a neighboorhood of $S_{\kc}$.
\end{itemize}
\end{definition}
Writing $\A=\begin{pmatrix}a_1 & a_2\\ a_3 & a_4\end{pmatrix}$, condition (1) implies negative trace of $\A$, $a_1+a_4<0$, and positive determinant $a_1a_4>a_2a_3$, and (3) implies the well known condition $d_1a_4+d_2a_1>0$, which together imply $a_2a_3<a_1a_4<0$, e.g., \cite{Murray2003}.

As a first step to understand the impact of advection,  the next lemma shows that, for this two-component case, the unfolding by $\beta$ is only to quadratic order.

\begin{lemma}\label{l:Turbeta} 
For the critical eigenvalues near a Turing instability of \eqref{e:RDS} as in Definition \ref{def:Turing} it holds for any  $(k_\crit,\ell_\crit)\in S_{\kc}$ that
\[
\lambda_\crit(k_\crit,\ell_\crit;\beta)=\rmi k_\crit (\lambda_{\beta}+c) \beta + k_\crit^2\lambda_{\bbeta} \beta^2 + \calO(|k_\crit\beta|^3),
\]
where $\lambda_{\beta}=\frac{a_4-\kcsq d_2}{a_1+a_4-\kcsq(d_1+d_2)}$, $\lambda_{\bbeta}= \frac{(a_1-\kcsq d_1)(a_4-\kcsq d_2)}{(a_1+a_4-\kcsq(d_1+d_2))^3}>0$. In particular, the real part grows fastest for 1D-modes with $\ell_\crit=0$ and remains zero for transverse modes with $k_\crit=0$.
\end{lemma}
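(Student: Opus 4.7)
The plan is to apply the implicit function theorem to the dispersion relation $d(\lambda,k,\ell)=0$ at the isolated simple root $\lambda=0$, viewed as a perturbation in $\beta$ at fixed $(k_\crit,\ell_\crit)\in S_{\kc}$ with $\calpha=0$. The only obstruction is bookkeeping: we need to separate the trivial convective shift coming from the $c\Id$ part of $\B$, and use the critical determinant condition to simplify the coefficients.

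First I would observe that $\B(c)=c\,\Id+\B(0)$, so that $\rmi k\beta\B(c)$ contributes $\rmi k\beta c\,\Id$ which merely shifts each eigenvalue of $\hat\calL$ by $\rmi k\beta c$. Hence it suffices to prove the formula for $c=0$ and then add $\rmi k_\crit\beta c$ to $\lambda_\crit$ at the end. With $c=0$ and $\calpha=0$, writing $\alpha_j := a_j-\kcsq d_j$ for $j=1$ ($d_1$) and using $k^2+\ell^2=\kcsq$, the dispersion relation becomes the quadratic polynomial
\[
P(\lambda,\beta)=\lambda^2-(\alpha_1+\alpha_4+\rmi k_\crit\beta)\lambda+\alpha_1\alpha_4+\rmi k_\crit\beta\,\alpha_4-a_2a_3.
\]
At $\beta=0$ the critical condition gives $\alpha_1\alpha_4=a_2a_3$, so $\lambda=0$ is a root and $P_\lambda(0,0)=-(\alpha_1+\alpha_4)=-\bigl(a_1+a_4-\kcsq(d_1+d_2)\bigr)\neq 0$ by Definition~\ref{def:Turing}(1),(3). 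The implicit function theorem yields an analytic branch $\lambda(\beta)$ with $\lambda(0)=0$.

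Next, I would compute the expansion by implicit differentiation. From $P_\lambda\lambda'+P_\beta=0$ with $P_\beta(0,0)=\rmi k_\crit\alpha_4$ one gets $\lambda'(0)=\rmi k_\crit\lambda_\beta$ with $\lambda_\beta$ as stated. Differentiating a second time and using $P_{\lambda\lambda}=2$, $P_{\lambda\beta}=-\rmi k_\crit$, $P_{\beta\beta}=0$ yields
\[
\lambda''(0)=\frac{-2(\lambda'(0))^2+2\rmi k_\crit\lambda'(0)}{\alpha_1+\alpha_4}
=\frac{2k_\crit^2\lambda_\beta(1-\lambda_\beta)}{\alpha_1+\alpha_4}.
\]
Rewriting $1-\lambda_\beta=\alpha_1/(\alpha_1+\alpha_4)$ gives $\tfrac12\lambda''(0)=k_\crit^2\alpha_1\alpha_4/(\alpha_1+\alpha_4)^3$, i.e. the claimed $k_\crit^2\lambda_{\bbeta}$. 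Restoring the shift $\rmi k_\crit\beta c$ recovers the full statement, with error $\calO(|k_\crit\beta|^3)$ from the analytic remainder (homogeneity in $k_\crit\beta$ is visible from $P$).

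For positivity of $\lambda_{\bbeta}$, I would use that the critical condition gives numerator $\alpha_1\alpha_4=a_2a_3$, which together with the standard Turing inequalities $a_2a_3<a_1a_4<0$ (recalled just before the lemma) shows the numerator is negative; the denominator is the cube of the strictly negative trace $\alpha_1+\alpha_4$, hence negative as well, so $\lambda_{\bbeta}>0$. Finally, for the last sentence, $\Re\lambda_\crit=k_\crit^2\lambda_{\bbeta}\beta^2+\calO(\beta^3)$ is maximised on $S_{\kc}$ exactly when $k_\crit^2=\kcsq$, i.e.\ $\ell_\crit=0$, and for $k_\crit=0$ the advective term $\rmi k\beta\B$ in $\hat\calL$ vanishes identically, so $\lambda_\crit\equiv 0$ in $\beta$. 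The main subtlety is really just the simplification via $\alpha_1\alpha_4=a_2a_3$; everything else is direct implicit-function calculus.
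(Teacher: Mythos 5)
Your proof is correct and takes essentially the same route as the paper: there, the shift $\rmi k_\crit\beta c\,\Id$ is split off and the problem is reduced to a separate lemma (Lemma~\ref{l:lambetamata}) on the zero-eigenvalue expansion of a $2\times2$ matrix with $\rmi\delta$ added to the $(1,1)$ entry, $\delta=k_\crit\beta$, which is proved by exactly the implicit differentiation of the characteristic polynomial that you carry out inline, including the same use of the criticality identity $(a_1-\kcsq d_1)(a_4-\kcsq d_2)=a_2a_3$ and the sign conditions $a_2a_3<0$, $a_1+a_4-\kcsq(d_1+d_2)<0$ for the positivity of $\lambda_{\bbeta}$. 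Only note a harmless cancelling sign slip: since $P_\lambda(0,0)=-(a_1+a_4-\kcsq(d_1+d_2))$, the bracket in the second-derivative formula should read $2(\lambda'(0))^2-2\rmi k_\crit\lambda'(0)$ rather than its negative, though your final value $\lambda''(0)=2k_\crit^2\lambda_\beta(1-\lambda_\beta)/(a_1+a_4-\kcsq(d_1+d_2))$, and hence $\lambda_{\bbeta}$, is correct.
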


\begin{proof}
This follows immediately from the next lemma upon setting $\delta=k_\crit \beta$, $\ta_1 = -(k_\crit^2+\ell_\crit^2)d_1 + a_1$, $\ta_2=a_2$, $\ta_3=a_3$, $\ta_4 = -(k_\crit^2+\ell_\crit^2)d_2 + a_4$ and shifting by $\rmi k_\crit \beta c$. The last statement of the lemma is simply a consequence of the fact that the largest value real part of $\lambda_\crit$ is attained at the largest value of $k_\crit^2$, which occurs at $\ell_\crit=0$ since $k_\crit^2+\ell_\crit^2=\kcsq $.
\end{proof}

\begin{remark}\label{r:Turing}
The lemma in fact proves  the plots in Figure 2 of \cite{Siero2015} near onset. 
It is well known that for a two-component system $\kcsq = \frac{d_1 a_4+d_2 a_1}{2d_1d_2}$ and $a_2a_3= (a_1-\kcsq d_1)(a_4-\kcsq d_2)$.
\end{remark}

\begin{lemma}\label{l:lambetamata} 
For a matrix $\begin{pmatrix}\ta_1+\rmi \delta & \ta_2\\ \ta_3 & \ta_4\end{pmatrix}$ with $b_1\neq 0$ and simple zero eigenvalue, the expansion of that eigenvalue reads
\[
\lambda(\delta)=\rmi \lambda_| \delta + \lambda_{||} \delta^2 + \calO(|\delta|^3),
\]
where $\lambda_|=\frac{\ta_4}{\ta_1+\ta_4}$, $\lambda_{||}= \frac{\ta_1\ta_4}{(\ta_1+\ta_4)^3}$. 
\end{lemma}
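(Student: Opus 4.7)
The plan is to reduce the eigenvalue expansion to an exercise in the implicit function theorem applied to the characteristic polynomial. First I would write down
\[
p(\lambda,\delta) := \det\begin{pmatrix}\ta_1+\rmi\delta-\lambda & \ta_2 \\ \ta_3 & \ta_4-\lambda\end{pmatrix} = \lambda^2 - (\ta_1+\ta_4+\rmi\delta)\lambda + (\ta_1+\rmi\delta)\ta_4 - \ta_2\ta_3,
\]
and use the hypothesis that $\lambda=0$ is a simple root at $\delta=0$. Simplicity forces $\ta_1\ta_4 = \ta_2\ta_3$ (so the constant term vanishes) and the trace $\ta_1+\ta_4\neq 0$ (so the second eigenvalue is separated from $0$; I suspect the stray symbol $b_1$ in the statement is a typo for $\ta_1+\ta_4$). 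Then $\partial_\lambda p(0,0) = -(\ta_1+\ta_4) \neq 0$, so the implicit function theorem yields a unique smooth branch $\lambda(\delta)$ with $\lambda(0)=0$ and admits a Taylor expansion $\lambda(\delta) = c_1\delta + c_2\delta^2 + \calO(|\delta|^3)$.

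Next I would substitute this ansatz into $p(\lambda(\delta),\delta)=0$ and match powers of $\delta$. The constant term vanishes automatically by the determinant condition above. At order $\delta$ the equation reduces to $-(\ta_1+\ta_4)c_1 + \rmi\ta_4 = 0$, giving
\[
c_1 = \rmi\,\frac{\ta_4}{\ta_1+\ta_4},
\]
which identifies $\lambda_|=\ta_4/(\ta_1+\ta_4)$. At order $\delta^2$ the relation becomes $c_1^2 - (\ta_1+\ta_4)c_2 - \rmi c_1 = 0$; substituting the expression for $c_1$ and simplifying $c_1^2 - \rmi c_1 = \ta_1\ta_4/(\ta_1+\ta_4)^2$ yields
\[
c_2 = \frac{\ta_1\ta_4}{(\ta_1+\ta_4)^3},
\]
which is exactly $\lambda_{||}$. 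The remainder estimate $\calO(|\delta|^3)$ comes from the smoothness of the implicit branch.

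There is no real obstacle here; the only subtlety is verifying that the hypothesis guarantees both a well-defined Taylor expansion and the non-degeneracy $\ta_1+\ta_4\neq 0$, which is where I would spell out that the simplicity hypothesis on the zero eigenvalue is precisely what prevents the denominators in $\lambda_|, \lambda_{||}$ from vanishing. Everything else is bookkeeping on two orders of the characteristic polynomial.
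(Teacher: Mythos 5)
Your proof is correct and follows essentially the same route as the paper, which likewise expands the characteristic polynomial and uses implicit differentiation, with the zero eigenvalue giving $b_1b_4=b_2b_3$ and its simplicity giving $b_1+b_4\neq0$. One small remark: the hypothesis $b_1\neq 0$ is not a stray typo for the trace---it is indeed not needed for the expansion itself (only $b_1+b_4\neq0$ enters the denominators, as you observe) but is carried in the statement because it is used elsewhere, e.g.\ in Remark~\ref{r:ev}.
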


\begin{proof} Straightforward implicit differentiation, expansion of characteristic polynomial and use of assumptions, which in particular imply $(b_1+b_4)b_1\neq 0$.
\end{proof}

Note that $b_2=0$ or $b_3=0$ is not possible due to the assumption $b_1\neq0$ and $b_1b_4=b_2b_3$.

\begin{remark}\label{r:ev}
For the matrix $\begin{pmatrix}\ta_1 & \ta_2\\ \ta_3 & \ta_4\end{pmatrix}$ in Lemma~\ref{l:lambetamata}, i.e., $\delta=0$, we can choose the kernel eigenvector $\E_0$ and the adjoint kernel eigenvector $\E_0^*$ with $\langle \E_0,\E_0\rangle=1$ and $\langle \E_0,\E_0^* \rangle=1$ as
\begin{align*}
E_0 = (b_2,-b_1)^T/c_0, \quad E_0^* = (b_3,-b_1)^T/c_0^*,
\end{align*}
with $c_0 := \sqrt{b_2^2 + b_1^2}$, $c_0^* := (b_2b_3 + b_1^2)/c_0$. Here $c_0^*\neq0$ since $b_1^2+b_2b_3 = b_1^2+b_1b_4 = b_1(b_1+b_4)\neq0$.
\end{remark}

\medskip
In contrast to $\beta$, the change of real parts of the critical eigenvalue through $\calpha$, with matrix $M=(m_{ij})_{1\leq i,j\leq 2}$, is linear with coefficient 
\begin{equation}
\begin{aligned}
\lambda_\M&:=-\left.\frac{\partial_\calpha d}{\partial_\lambda d}\right|_{\calpha=0,\lambda=0}\\
&=\frac{m_{11}(a_4-\kcsq d_2)-m_{12}a_3-m_{21}a_2+m_{22}(a_1-\kcsq d_1)}{a_1+a_4-\kcsq (d_1+d_2)}\neq 0,
\end{aligned}
\end{equation}
where we \emph{assume} $\lambda_M\neq 0$ throughout this paper. Notably, $\lambda_\M=1$ if $\M=\Id$ in which case $\calpha$ just rigidly moves the real part of the spectrum.

In the following we therefore change parameters and use the effective impact on the real part given by 
\[
\alpha:=\lambda_\M\calpha
\]
as the new parameter so that
\begin{equation}\label{e:lamcrit1}
\begin{aligned}
\lambda_\crit(k_\crit,\ell_\crit;\alpha,\beta) = \ &\alpha + \rmi (k_\crit (\lambda_{\beta}+c) + a_\M\lambda_{\M\beta}\alpha)\beta + k_\crit^2\lambda_{\bbeta} \beta^2\\
& + \calO(a_\M\alpha^2 + |k_\crit\beta|^3),
\end{aligned}\end{equation}
with $\lambda_{\M\beta} :=  k_\crit\frac{m_{22}-\lambda_\M-(2\lambda_\M-m_{11}-m_{22})\lambda_{\beta}}{\lambda_\M(a_1+a_4-\kcsq(d_1+d_2))}$, and we emphasise the special case $M=\Id$ through the factor $a_M$, where $a_M=0$ if $M=\rm{Id}$ and $a_M=1$ otherwise. 

\medskip
Here we highlight an a priori consequence for the $\Lspace^2(\R^2)$-stability of striped solutions $U_\rms$ with wavenumber $\kappa=\kc+\kap$ that are oriented orthogonal to the $x$-direction, i.e., $\partial_y U_\rms\equiv 0$. We assume (and prove in the next section) the existence of a curve of such striped solutions $U_\rms(x;\tau)$ parametrised by $\tau\in[0,\tau_0)$ for some $\tau_0>0$, with $U_\rms(x;0)=0$, and corresponding parameter curve $\mu(\tau)=(\alpha,\beta,\kap)(\tau)$ with $\beta(0)\neq0$, $|\mu(0)|\ll1$, and velocity parameter $c(\tau)$. 

\begin{corollary}\label{c:stable}
For $0<\tau\ll1$ the spectral stability in $\Lspace^2(\R^2)$ of $U_\rms$ is entirely determined by large-wavelength modes, i.e., if $U_\rms$ is zigzag and Eckhaus stable then it is spectrally stable in $\Lspace^2(\R^2)$.
\end{corollary}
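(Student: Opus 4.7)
The plan is to apply Floquet--Bloch decomposition in $x$ combined with Fourier in $y$, reducing the $\Lspace^2(\R^2)$ spectrum of the linearisation $\calL_{\rm s}$ about $U_\rms$ to the union of discrete spectra of fibre operators $\hat\calL_{\rm s}(\gamma,\ell;\tau)$ on $\Lspace^2_{\rm per}([0,2\pi/\kappa])^2$, parametrised by Bloch wavenumber $\gamma\in[-\kappa/2,\kappa/2]$ and transverse Fourier wavenumber $\ell\in\R$. This decomposition is applicable because $U_\rms(x;\tau)$ is $2\pi/\kappa$-periodic in $x$ and independent of $y$; the fibre operators have compact resolvent and depend analytically on all arguments. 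I will then argue that outside a small sideband neighbourhood $U_{\rm sb}$ of $(\gamma,\ell)=(0,0)$---which by construction contains exactly the zigzag and Eckhaus modes---the spectrum lies uniformly in $\{\Re\lambda<0\}$ for $0<\tau\ll 1$.

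Partition the complement of $U_{\rm sb}$ into three pieces. For $|\ell|\ge L$ with $L$ fixed large, the diffusive term $-\ell^2 D$ dominates the bounded $U_\rms$-perturbation and forces $\Re\Sigma(\hat\calL_{\rm s})\le -\tfrac12\ell^2\min(d_1,d_2)$, uniformly in $\gamma$ and in $\tau$ small. On the compact complement $[-\kappa/2,\kappa/2]\times[-L,L]\setminus U_{\rm sb}$, at $\tau=0$ we have $\calL_{\rm s}=\calL|_{\mu(0)}$, whose spectrum is described by~\eqref{e:lamcrit1}; by Definition~\ref{def:Turing} and continuity in $\mu$, for $|\mu(0)|\ll 1$ this spectrum is contained in $\{\Re\lambda\le-\delta\}$ except in a thin tubular neighbourhood $T_\Gamma$ of the image $\Gamma$ of the critical circle $S_{\kc}$ in Bloch--Fourier coordinates, and analytic perturbation in $\tau$ preserves strict stability off $T_\Gamma$. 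It remains to treat the residual annular set $T_\Gamma\setminus U_{\rm sb}$ of off-resonance critical modes, notably the transverse mode image near $(\gamma,\ell)=(0,\pm\kc)$.

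Each point $(\gamma_0,\ell_0)\in\Gamma\setminus U_{\rm sb}$ corresponds to a critical Fourier mode $(k_0,\ell_0)$ near $S_{\kc}$ with $k_0\equiv\gamma_0\pmod{\kappa}$ and $(k_0,\ell_0)\ne(\pm\kappa,0)$. Since $U_\rms$ has $x$-Fourier support on $\kappa\Z$, the stripe-induced perturbation $2\Q[U_\rms,\cdot]+3\K[U_\rms,U_\rms,\cdot]$ couples this critical mode only to $(k_0+n\kappa,\ell_0)$, $n\in\Z\setminus\{0\}$, all of which are non-critical by the non-degeneracy in Definition~\ref{def:Turing}. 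A standard second-order Rayleigh--Schr\"odinger calculation then yields a leading-order shift of the simple critical eigenvalue of the form $-c_\bullet A^2+\calO(A^3)$; in the isotropic limit $\beta=0$, $\M=\Id$, $\Q=0$, the coefficient $c_\bullet$ is exactly twice the diagonal cubic coefficient and hence positive, encoding the classical fact that the isotropic cubic stabilises all off-resonance critical modes on $S_{\kc}$. Combined with the base-state contribution $\alpha+\calO(|\mu(0)|^2)$ and the pitchfork relation $A^2=(\alpha+\rho_\beta\beta^2+\rho_\kap\kap^2)/|\rho_\nl|$ of Theorem~\ref{t:bif}, this shift dominates on the supercritical branch $\rho_\nl<0$, giving $\Re\lambda<0$ throughout $T_\Gamma\setminus U_{\rm sb}$.

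The main obstacle is verifying $c_\bullet>0$ uniformly along $\Gamma\setminus U_{\rm sb}$ under the full anisotropic data $(\beta,\M,\Q)$; this follows by continuity from the isotropic case for $|\mu(0)|\ll 1$, but makes the result genuinely local in the bifurcation parameters. The remaining ingredients---Bloch decomposition, tail control via diffusion, and analytic perturbation in the compact bulk---are standard tools in the spirit of \cite{Mielke1995,Rademacher2007,Doelman2009}.
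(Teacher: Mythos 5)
Your proposal goes wrong precisely at the step you flag as the ``main obstacle'', and the way you try to get past it does not work. The modes in $T_\Gamma\setminus U_{\rm sb}$ cannot in general be stabilised by an $\calO(A^2)$ Rayleigh--Schr\"odinger shift: when $\Q\neq0$, at the angles $\pm\pi/3$ on the critical circle the translate $(k_0-\kappa,\ell_0)$ lies again on (or arbitrarily close to) $S_{\kc}$, so your claim that the stripe couples the critical mode only to non-critical modes ``by the non-degeneracy in Definition~\ref{def:Turing}'' is false -- non-degeneracy of the Turing point does not exclude this resonance. There the quadratic term produces an $\calO(A\,\|\Q\|)$ splitting of two near-neutral modes with a positive branch; this is exactly the classical hexagonal instability of stripes at onset recalled in the introduction of the paper. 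Your continuity argument ``$c_\bullet>0$ follows by continuity from the isotropic case $\Q=0$'' cannot repair this, because for any fixed $\Q\neq0$ the $\calO(A)$ resonant coupling dominates the $\calO(A^2)$ cubic shift in the bifurcation limit $A\to0$; the limits do not commute. So, as written, the argument would ``prove'' stability of circle modes that are generically unstable in the isotropic situation, which signals that an essential hypothesis has not been used.

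That hypothesis is $\beta(0)\neq0$, which your proof never invokes and which is the whole point of the corollary. By Lemma~\ref{l:Turbeta} (the Squire-type statement) and \eqref{e:lamcrit1}, at $\tau=0$ the real part on the critical circle is $\alpha(0)+k_\crit^2\lambda_{\bbeta}\beta(0)^2+\dots$ with $\alpha(0)\approx-\rho_\beta\beta(0)^2-\rho_\kap\kap(0)^2$, so every circle mode with $\ell_\crit$ bounded away from $0$ (in particular the hexagonal-resonant and transverse modes) is strictly damped by an amount of order $\beta(0)^2$, uniformly in $\tau$ small. Consequently no nonlinear stabilisation estimate is needed at all on $T_\Gamma\setminus U_{\rm sb}$: at $\tau=0$ the spectrum outside the sideband neighbourhood lies in $\{\Re\lambda<-\delta\}$, and since the Bloch dispersion relation $d_\st(\lambda,\gamma,\ell)$ depends continuously (analytically) on $\tau$ with $\calL_\st(0)=\calL$, its roots converge locally uniformly to those of $d$, so the spectrum stays uniformly in the open left half plane there for $0<\tau\ll1$; the $\calO(A\|\Q\|)$ hexagonal coupling is then harmless because it vanishes as $\tau\to0$ while the linear damping does not. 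This is the paper's argument; your decomposition into large $|\ell|$, compact bulk, and the critical annulus is fine and standard, but the critical annulus must be handled by the advection-induced damping at $\tau=0$ plus continuity, not by a cubic-coefficient sign argument, which is both unnecessary and, near the quadratic resonances, incorrect.
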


In particular, a family with constant $\kap=0$, i.e., stripes with wavenumber $\kc$, bifurcates stably, if it is zigzag-stable at onset.

\begin{proof}
Since $\beta(0)\neq 0$, by Lemma~\ref{l:Turbeta}, see also the Squire-theorem~\cite[Theorem 2]{Siero2015}, the spectrum of $U_\rms(x;0)=u_\hom$ with parameters $\mu(0), c(0)$ is critical only for $\kap,\ell\approx 0$. More precisely, for all sufficiently small $\epsilon>0$ there is $\delta>0$ such that $\Re(\lambda_\crit(\kc+\kap,\ell;\alpha(0),\beta(0)))<-\delta$ for all $\kap,\ell$ with $|\kap|, |\ell|>\varepsilon$. It suffices to show that the same holds for the spectrum of the linearisation $\calL_\st$ of \eqref{e:RDS} in $U_\rms$ for any sufficienly small $\tau$.

Via Floquet-Bloch decomposition, the spectrum of $\calL_\st$ can be encoded in a complex analytic dispersion relation $d_\st(\lambda,\gamma,\ell)$, $\gamma\in[0,2\pi)$, e.g., \cite{Mielke1995,Rademacher2007,Doelman2009}, and \S\ref{s:stability}. Since $\calL_\st(0)=\calL$ roots of $d_\st$ converge locally uniformly in $\C$ to roots of $d$ for $k=2\pi m + \gamma$ with suitable $m\in\Z$. Hence, any spectrum that is bounded away from $\rmi\R$ for $u_\hom$ will be bounded away from $\rmi\R$ for all sufficiently small $\tau$.
\end{proof}

\section{Bifurcation of stripes}\label{s:bif}
Stripes are travelling waves solutions of \eqref{e:RDS} that are constant in $y$ and for any $t$ periodic in $x$. In order to determine the bifurcation of stripes it thus suffices to consider the 1D case $\x=x\in[0,2\pi/\kappa]$ with periodic boundary conditions and wavenumber $\kappa$. The definition of a Turing instability point implies that $\calL$ restricted to 1D possesses a kernel at $\alpha=\beta=0$ on spaces of $2\pi/\kc$-periodic functions and upon unfolding also for nearby periods. Let us therefore rescale space and consider periodic boundary conditions on $[0,2\pi]$. This modifies the linear part \eqref{e:RDS} to 
\[
\calL_\mu:= \kappa^2 D\partial_x^2  + \A + \calpha \M  + \beta \kappa\B \partial_x
\]
with the off-critical parameter $\kap$ in $\kappa= \kc+\kap$ that allows to detects stripes with nearby wavenumber. We recall the parameter vector $\mu=(\alpha,\beta,\kap)$. By Lemma \ref{l:Turbeta}, \eqref{e:lamcrit1}, and straightforward generalisation to include $\kap$, the continuation of the zero eigenvalue of $\calL_\mu$ has an expansion
\begin{equation}\label{e:evlinearop}
\begin{aligned}
	\lambda_\mu =&\; \alpha + \rho_\beta\beta^2+\rho_\kap\kap^2 + \rmi(\gamma_\beta+\gamma_{\kap\beta}\kap + a_\M\lambda_{\M\beta}\alpha)\beta \\ 
	& +a_\M \lambda_{\M\kap}\alpha\kap+ \calO(a_\M\alpha^2 + |\kap|^3+ |\beta|^3),
\end{aligned}
\end{equation}
where again $a_M=0$ if $M=\rm{Id}$ and $a_M=1$ otherwise. The coefficients are
\[
\rho_\beta=\kcsq\lambda_{\bbeta}>0, \quad \gamma_\beta=\kc (\lambda_{\beta}+c), 
\]
as in Lemma~\ref{l:Turbeta} and with $\gamma_{\kap\beta}=\lambda_{\kap\beta}+c$, the dispersion relation $d(\lambda,k;\mu)=0$ as well as $\partial_k \lambda_\crit(\kc;0)=0$ yields
\begin{align*}
\lambda_{\kap\beta} &=  \rmi\left.\frac{\partial_{k\lambda}d\cdot\partial_\beta\lambda+\partial_k^2d}{\partial_\lambda d}\right|_{k=\kc,\mu=0,\lambda=0} \in \R,\\
\lambda_{\M\kap} &=  -\left.\frac{\lambda_\M\partial_{k\lambda}d+\partial_k^2d}{\lambda_\M\partial_\lambda d}\right|_{k=\kc,\mu=0,\lambda=0} \in \R,\\
\rho_\kap &= -\left.\frac{\partial_k^2 d}{2\partial_\lambda d}\right|_{k=\kc,\lambda=0}<0
\end{align*}
with the last sign due to $d_1a_4+d_2a_1>0$, $a_1+a_4<0$ and
\[
\rho_\kap = -\frac{d_1a_4+d_2a_1-6d_1d_2\kcsq}{a_1+a_4-(d_1+d_2)\kcsq}
= \frac{2(d_1a_4+d_2a_1)}{a_1+a_4-(d_1+d_2)\kcsq}.
\]

Vanishing real part $\Re(\lambda_\mu) = 0$ thus occurs to leading order on a hyperbolic paraboloid 
\[
\alpha =\calB(\kap,\beta)= -(\rho_\kap\kap^2 +\rho_\beta\beta^2)
\] 
in $\mu$-space. Since the eigenvalue is stable (unstable) for $\alpha<\calB(\kap,\beta)$ ($\alpha>\calB(\kap,\beta)$), this constitutes the bifurcation surface at leading order. 

The next theorem specifies the bifurcation and expansion of the stripe solutions near $\mu=0$, where our main point is the effect of $\beta$ and its interaction with $\alpha, \kap$. Rather than expanding with abstract coefficients, we provide explicit formulae evaluated at $\mu=0$ in terms of the following quantities.
\begin{equation}\label{e:defs} 
\begin{aligned}
\tQ &:= -2\A^{-1} \Q[\overline{\E_0},\E_0],\quad
\hQ := -2(-4 \kcsq D + \A)^{-1} \Q[\E_0,\E_0],\\
\tq&:= \langle \Q[\E_0, \tQ],\E_0^* \rangle,\quad 
\hq:= \langle \Q[\overline{\E_0}, \hQ],\E_0^* \rangle,\\
\hK &:= \langle \K[\E_0,\E_0,\overline{\E_0}], \E_0^*\rangle, \quad 
\rho_\nl:= 3\hK + 2\tq + \hq,\\
w_{A\calpha} &:= (-\kcsq D+\A)^{-1} (\langle \M \E_0, \E_0^*\rangle- \M) \E_0,\\
w_{A\beta} &:= \kc(-\kcsq D+\A)^{-1}(\langle \B\E_0,\E_0^*\rangle -\B)\E_0,\\
w_{A\kap} &:= 2\kc(-\kcsq D+\A)^{-1} D \E_0,\\
w_{A\beta\beta}&:= 2\kc(-\kcsq D+\A)^{-1} (\B w_{A\beta} - \langle \B w_{A\beta},E_0^*\rangle\E_0),\\
e_\mu(x)&:=(\E_0+\calpha  w_{A\calpha} + \rmi\beta w_{A\beta} + \kap w_{A\kap} + \beta^2 w_{A\beta\beta}) \rme^{\rmi x}.
\end{aligned}
\end{equation}
We note that the evaluation at $\mu=0$ in the following theorem gives the velocity parameter $c=-\lambda_\beta$ and at this value of $c$ we have $\langle \B\E_0,\E_0^*\rangle =0$.

\begin{theorem}[Stripe existence]\label{t:bif}
Up to spatial translation, non-trivial stripe solutions to \eqref{e:RDS} with parameters $\mu$, and sufficiently small $|\mu|, A$ with $\|U_\rms(\cdot;\mu)\|_{\Lspace^2}=\calO(A)$ on $[0,2\pi/\kappa]$, are in 1-to-1 correspondence with solutions $A> 0$ to
\begin{equation}\label{e:stripeeqn}
\Re(\tlam(\mu)) + \rho_\nl A^2 + \calO\left(A^3\right)= 0,
\end{equation}
where $\tlam(\mu) =  r(\mu)\lambda_\mu$, cf.\ \eqref{e:evlinearop}, and $r$ is smooth with $r(0)=1$. 
Stripes have velocity $\beta c$ with 
\begin{align}\label{e:stripev}
c = -\lambda_{\beta}-\frac{\lambda_{\M\beta}}{\kc}a_\M\alpha - \frac{\lambda_{\kap\beta} - \lambda_{\beta}}{\kc}\kap 
+ \calO(a_\M|\alpha\kap|+\kap^2 + |A|^3)
\end{align}
and, in this comoving frame, are of the form 
\begin{align}\label{e:Stripes}
U_\rms(x;\mu) =\ & A(e_\mu(x) + \overline{e_\mu(x)}) + \frac{A^2}{2} \hQ\left( \rme^{2 \rmi x} +  \rme^{-2 \rmi x}\right)+ A^2\tQ + \calR,
\end{align}
with the smooth remainder $\calR=\calO(|A|(A^2 + a_M\alpha^2 + \kap^2 + |\beta\kap| + |\beta|^3))$ near $\mu=0$.  
Moreover, the coefficients in the expansion of $\tlam$ analogous to \eqref{e:evlinearop} satisfy 
\begin{equation}\label{e:stripecoeffalt}
\begin{aligned}
\lambda_\M &= \langle M\E_0, \E_0^* \rangle,\quad 
\lambda_{\M\beta} = \langle M w_{A\beta} + \kc \B w_{A\calpha}, \E_0^* \rangle/\lambda_\M,\\
\lambda_{\M\kap} &= \langle M w_{A\kap} - 2\kc D w_{A\calpha}, \E_0^* \rangle/\lambda_\M,\\
\rho_\beta &=  -\kc\langle \B w_{A\beta}, \E_0^*\rangle, \quad
\rho_\kap = -2\kc\langle D w_{A\kap}, \E_0^* \rangle,\\
\gamma_\beta &= \kc\langle \B\E_0, \E_0^* \rangle, \quad 
\gamma_{\kap\beta} = \kc\langle \B w_{A\kap} - 2D w_{A\beta}, \E_0^* \rangle + \langle \B\E_0,\E_0^* \rangle.
\end{aligned}
\end{equation}
\end{theorem}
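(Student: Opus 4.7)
The approach is a Lyapunov--Schmidt reduction applied to the stationary travelling--wave equation
\[
\calF(U;\mu,c) := \calL_\mu U + \Q[U,U] + \K[U,U,U] = 0
\]
in the comoving frame $\xi=\kappa(x-\beta c\,t)$, posed on $\Hspace^2_\per([0,2\pi])$. The velocity parameter $c$ is treated as an additional unknown, entering through $\B(c)$. At $\mu=0$ the operator $\calL_0=\kcsq D\partial_\xi^2+\A$ is Fredholm of index zero, with kernel spanned by $\E_0\rme^{\rmi\xi}$ and $\overline{\E_0}\rme^{-\rmi\xi}$, and adjoint kernel spanned by $\E_0^*\rme^{\rmi\xi}$ and $\overline{\E_0^*}\rme^{-\rmi\xi}$, using Remark~\ref{r:ev}.

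First I would decompose $U=A\,e_\mu(\xi)+\bar A\,\overline{e_\mu(\xi)}+W$ with $W$ in the spectral complement of the kernel, and solve $(\Id-P)\calF=0$ for $W=W(A,\bar A,\mu,c)$ by the implicit function theorem, where $P$ is the projection onto the kernel along the adjoint kernel. The quadratic terms at the non--critical modes $\pm 2$ and $0$ give $\tfrac12 A^2\hQ(\rme^{2\rmi\xi}+\rme^{-2\rmi\xi})+|A|^2\tQ$ with $\hQ,\tQ$ as in \eqref{e:defs}, the matrices $(-4\kcsq D+\A)$ and $\A$ being invertible by Definition~\ref{def:Turing}. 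The linear corrections $w_{A\calpha},w_{A\beta},w_{A\kap},w_{A\beta\beta}$ in \eqref{e:defs} arise by differentiating the critical--mode component of $\calF(e_\mu;\mu,c)$ in the corresponding parameter and enforcing Fredholm compatibility via subtraction of $\langle\,\cdot\,,\E_0^*\rangle\E_0$; the iterated contribution $w_{A\beta\beta}$ comes from resolving at second order the inhomogeneity produced by $\B w_{A\beta}$.

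The translational symmetry $\xi\mapsto\xi+\varphi$ scales the kernel coefficient by $A\mapsto A\rme^{\rmi\varphi}$, so I may fix $A\in\R_+$. The two adjoint projections $\langle\calF,\E_0^*\rme^{\pm\rmi\xi}\rangle=0$ are then complex conjugates and collapse to a single complex scalar equation
\[
A\,\tlam(\mu;c)+\rho_\nl A^3+\calO(A^4)=0,
\]
with $\tlam(\mu;c):=\langle\calL_\mu e_\mu,\E_0^*\rme^{\rmi\xi}\rangle$ and cubic coefficient $\rho_\nl=3\hK+2\tq+\hq$ obtained by projecting $\K[\E_0,\E_0,\overline{\E_0}]$ and the $\Q$--couplings with $\hQ,\tQ$ against $\E_0^*$. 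Both $\tlam$ and $\lambda_\mu$ from \eqref{e:evlinearop} vanish exactly where $\calL_\mu$ acquires a critical kernel, with common transversality $\partial_\alpha\tlam|_{0}=\langle\M\E_0,\E_0^*\rangle=\lambda_\M\neq 0$, whence $\tlam=r(\mu)\lambda_\mu$ for a smooth $r$ with $r(0)=1$. Since $\partial_c$ of the imaginary part equals $\kc\beta\neq 0$ for $\beta\neq 0$, the implicit function theorem uniquely solves the imaginary part for $c=c(\mu)$, and reading off the linear--in--$\alpha,\kap$ coefficients yields \eqref{e:stripev}; dividing the real part by $A$ gives \eqref{e:stripeeqn}.

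The main non--routine step is then verifying that the coefficients of $\tlam$ in the form \eqref{e:stripecoeffalt} agree with those in \eqref{e:evlinearop} computed directly from the dispersion relation. Each is a parameter derivative of $\tlam(\mu;c(\mu))$ at $\mu=0$; substituting the corrections $w_{A\cdot}$ into the projection $\langle\calL_\mu e_\mu,\E_0^*\rme^{\rmi\xi}\rangle$ produces precisely the inner products in \eqref{e:stripecoeffalt}. For instance, $\rho_\beta=\tfrac12\partial_\beta^2\tlam|_{\mu=0}=-\kc\langle\B w_{A\beta},\E_0^*\rangle$, and agreement with $\kcsq\lambda_{\bbeta}$ from Lemma~\ref{l:Turbeta} then follows from the explicit formula for $w_{A\beta}$ together with the identity $a_2a_3=(a_1-\kcsq d_1)(a_4-\kcsq d_2)$ of Remark~\ref{r:Turing}. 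The bookkeeping is laborious but entirely mechanical.
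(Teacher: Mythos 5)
Your proposal follows essentially the same route as the paper's Appendix A: a Lyapunov--Schmidt reduction of the comoving-frame steady-state equation, solving the range equation for $W$ by the implicit function theorem (yielding $\hQ$, $\tQ$ and the corrections $w_{A\calpha},w_{A\beta},w_{A\kap},w_{A\beta\beta}$), fixing $A\in\R_+$ by translation symmetry, and reading \eqref{e:stripeeqn}, \eqref{e:stripev}, \eqref{e:stripecoeffalt} off the real and imaginary parts of the projected equation, with $\tlam=r(\mu)\lambda_\mu$ identified through the linear ($F=0$) problem and $r(0)=1$ from $\partial_\calpha\tlam(0)=\langle \M\E_0,\E_0^*\rangle=\lambda_\M$, exactly as in the paper. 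The one point to tighten is the determination of $c$: instead of applying the implicit function theorem with $\partial_c\,\mathrm{Im}=\kc\beta\neq0$ (which degenerates as $\beta\to0$ and would not by itself give the expansion \eqref{e:stripev} uniformly near $\mu=0$), observe as the paper does that the imaginary part of the bifurcation equation carries an overall factor of $\beta$ (imaginary terms arise from odd powers of $\partial_x$, hence odd powers of $\beta$), divide by $\beta A$, and solve the resulting nondegenerate equation for $c$ uniformly down to $\beta=0$.
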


We defer the proof to Appendix \ref{s:bifproof}. In case $M=\rm{Id}$ clearly $\alpha$ uniformly shifts spectra so that $\alpha$ does not impact higher orders in $\lambda_\mu$ as can be seen from the fact that $\lambda_\M=1$ and $w_{A\calpha}=\lambda_{\M\beta}=\lambda_{\M\kap}=0$ in this case.

In its simplest case, the theorem reflects the well-known fact that, up to translation symmetry, for $\rho_\nl\neq 0$ the bifurcation is a generic pitchfork. Specifically, with respect to $\alpha$ the bifurcation is supercritical if $\rho_\nl<0$, which we shall assume  in the following stability study. 
 
Our main interest lies in the role of $\beta$ and $Q$. As noted in the discussion of the eigenvalues above, $\beta$ shifts the bifurcation points by order $\beta^2$. From \eqref{e:stripeeqn} we readily solve for the stripe amplitude as
\begin{align}\label{e:ampfull}
A = \left(1+ \calO\left(\sqrt{\Re(\tlam(\mu))}\right)\right)\sqrt{-\frac{\Re(\tlam(\mu))}{\rho_\nl}}.
\end{align}

Notably, the bifurcation loci, where $A=0$, occur on a surface in $\mu$-space that includes $\mu=0$ since the signs of $\rho_\beta$ and $\rho_\kap$ are opposite. The leading order part $\alpha + \rho_\beta\beta^2 + \rho_\kap\kap^2$ of $\Re(\tlam(\mu))$ coincides with that of $\Re(\lambda_\mu)$ and is homogeneous with respect to the scalings $\alpha=A^2\alpha'$, $\beta=A\beta'$, $\kap=A\kap'$, for which $A=0$ occurs at $\mu'=(\alpha',\beta',\kap')=0$ only. In these scaled parameters with $\mu'=\calO(1)$ the order analysis of remainders drastically simplifies to $\calR=\calO(A^3)$. In particular, the scaling to $\mu'$ gives
\begin{align}\label{e:amp}
A = A\sqrt{-\frac{\alpha' + \rho_\beta\beta'^2 + \rho_\kap\kap'^2}{\rho_\nl}}+ \calO(A^2).
\end{align}

\medskip
\begin{remark}\label{r:direction}
The sign of $c=c(\mu)$ is the direction of stripe motion relative to $\beta$, and is determined by $\lambda_{\beta}$ as $\sgn(c) = -\sgn(\lambda_{\beta})$. In terms of $a_1,a_4$  we have 
\[ 
\sgn(c) = -\sgn(a_1) 
\]
so the motion is with $\beta$ if the first component is an inhibitor and against $\beta$ otherwise.
\end{remark}
\begin{proof}
Recall the notation of Lemma~\ref{l:Turbeta} and Lemma~\ref{l:lambetamata}, which gives $\lambda_{\beta} = \frac{b_4}{b_1+b_4}$, and we have $b_1b_4=b_2b_3<0$ and $b_1+b_4<0$. 
For case (1) we note $a_1<0$ implies $b_1<0$, which implies $b_4>0$ and thus the claim.
For case(2) similarly from $a_4<0$, we have $b_4<0$ which leads to $b_1>0$. Hence $\lambda_{\beta} = \frac{b_1+b_4-b_1}{b_1+b_4} = 1-\frac{b_1}{b_1+b_4}>1$ implies $c<-1$.
\end{proof}

\section{Large wavelength stability}\label{s:stability}

Linearising \eqref{e:RDSper} in a stripe solution gives the operator and eigenvalue problem for a perturbation $U$, 
\begin{align}\label{e:evalstripe}
\calL_\mu U + 2\Q[U_\rms,U] + 3\K[U_\rms,U_\rms,U] = \lambda U,
\end{align}
e.g.~in the function space setting noted in Appendix~\ref{s:bifproof}. It is convenient to write the stripes in real terms, 
\begin{align*}
	U_\rms(x;\mu) =\ &2A(\E_0+ \kap w_{A\kap} + \calpha w_{A\calpha} + \beta^2 w_{A\beta\beta})\cos(x) - 2A\beta w_{A\beta} \sin(x)\\
	 &+ A^2\hQ\cos(2x)+ A^2\tQ + \calR.
\end{align*}
As we now view stripes $U_\rms(x)$  in two space dimensions $\x=(x,y)\in\R^2$ we may Fourier-transform \eqref{e:evalstripe} with respect to $y$ thus replacing $\partial_y^2$ by $-\ell^2$. In $x$ we perform a Floquet-Bloch-transform, i.e., in $\calL_\mu=\calL_\mu(\partial_x)$ replace $\partial_x$ by $\partial_x+\rmi\gamma$ and impose periodic boundary conditions on $[0,2\pi]$, e.g.,~\cite{Rademacher2007}. From \eqref{e:evalstripe} this gives the operator
\begin{align*}
\calT &:= \kappa^2 D ((\partial_x+\rmi\gamma)^2-\ell^2) + \A + \calpha \M + \beta\kappa\B (\partial_x + \rmi\gamma)  + 2\Q[U_\rms,\cdot] + 3\K[U_\rms,U_\rms,\cdot],
\end{align*}
which, as usual, arises for the perturbation in the form
\[
U(\x) = \rme^{\rmi(\gamma x + \ell y)}V(x;\gamma),
\]
where $V(x;\gamma)$ has periodicity of $U_\rms(x)$ in $x$ and we write $V_0(x):=V(x;0)\in\R^2$.

\medskip
Here we are interesting in the stability of stripes against large wavelength perturbations, i.e., $\gamma,\ell\approx 0$. 
Let us consider the eigenvalue problem $\calT V= \lambda V$ with respect to a parameter $p\in\{\ell,\gamma\}$ and denote evaluations at $p=0$ by subindex zero. The curve of eigenvalues attached to the translation mode at the origin thus has $\lambda|_0=0$, which is a simple zero eigenvalue with eigenvector $V_0$. The kernel of $\calT_0$ is therefore spanned by
\begin{align*}
\partial_xU_\rms =\ & -2A(\E_0 + \kap w_{A\kap}+ \calpha w_{A\calpha} + \beta^2 w_{A\beta\beta})\sin(x) - 2A\beta w_{A\beta}\cos(x)\\
& - 2A^2\hQ\sin(2x) + \calO(\calR).
\end{align*}  

Differentiating $\calT V= \lambda V$ with respect to $p$ and evaluating at $p=0$ gives
\begin{align}\label{e:firstdiff}
\calT_0(\partial_p V)_0  = (\partial_p\lambda)_0 V_0 - (\partial_p \calT)_0 V_0.
\end{align}
By Fredholm alternative, this is solvable in $(\partial_p V)_0$ if and only if the right-hand side is orthogonal to the kernel of adjoint operator of $\calT_0$ and thus
\begin{align}\label{e:firstdif}
	(\partial_p \lambda)_0 = \langle (\partial_p\calT)_0 V_0, V_0^* \rangle
\end{align}
with the normalisation $\langle V_0,V_0^* \rangle = 1$ and $V_0^*$ in the kernel of the adjoint operator
\[
\calT_0^* := \kappa^2 D \partial_x^2 + \A^T + \calpha \M^T - \beta\kappa\B \partial_x  + (2\Q[U_\rms,\cdot] + 3\K[U_\rms,U_\rms,\cdot])^T.
\]
In case $p=\ell$ we have $(\partial_\ell \calT)_0=0$ and it follows that $(\partial_\ell\lambda)_0=0$.
In case $p=\gamma$,
\begin{align}\label{e:Tgam}
(\partial_\gamma\calT)_0 = 2\rmi\kappa^2D\partial_x + \rmi\beta\kappa\B,
\end{align}
and it follows that
\begin{align}\label{e:gamfirstdiff}
	(\partial_\gamma\lambda)_0 = \rmi\kappa\langle (2\kappa D\partial_x + \beta\B)V_0,V_0^* \rangle \in \rmi\R,
\end{align}
which measures the correction of the phase velocity $c$ to the group velocity, cf. \cite{Doelman2009} and Remark \ref{r:groupv} in Appendix~\ref{s:Eckhausproof}. It is well known to vanish for stationary wavetrains $c=0$ due to reflection symmetry in $x$ of $V_0=\partial_x U_\rms$ and $V_0^*$; here this occurs for $\beta=0$ so that $(\partial_\gamma\lambda)_0=\calO(|\beta|)$ as we shall confirm in Appendix~\ref{s:Eckhausproof}.

Differentiating again and evaluating at $p=0$ gives 
\[
\calT_0(\partial_p^2 V)_0 = (\partial_p^2\lambda)_0 V_0 - (\partial_p^2\calT)_0 V_0 + 2(\partial_p\lambda)_0 (\partial_p V)_0 - 2(\partial_p\calT)_0 (\partial_p V)_0.
\]
Proceeding as above, in case $p=\ell$ we have
\begin{align}\label{e:seconddifell}
(\partial_\ell^2\lambda)_0 = \langle (\partial_\ell^2\calT)_0V_0,V_0^* \rangle = -2\kappa^2\langle DV_0,V_0^* \rangle,
\end{align}
and in case $p=\gamma$ we have
\begin{align}\label{e:seconddigfam}
(\partial_\gamma^2\lambda)_0 = \langle (\partial^2_\gamma\calT)_0V_0 - 2(\partial_\gamma\lambda)_0 (\partial_\gamma V)_0 + 2(\partial_\gamma\calT)_0(\partial_\gamma V)_0, V_0^* \rangle.
\end{align}
These quantities give the curvatures of spectrum at the origin in $\ell$ and $\gamma$ directions, respectively. Other directional derivatives are not relevant since $(\partial_\ell V)_0\in\ker\calT_0$ and thus $(\partial_{\ell\gamma}\lambda)_0=0$. Hence, the signs of \eqref{e:seconddifell}, \eqref{e:seconddigfam} determine the sideband stability or instability of stripes, which is commonly referred to as  Eckhaus un/stable for the $x$-direction, i.e. with respect to $\gamma$ and $\ell=0$, and as zigzag un/stable for the $y$-direction, i.e., with respect to $\ell$ and $\gamma=0$.

\bigskip
\paragraph{Zigzag instability} It is well-known that stripes become unstable against large wavelength perturbations parallel to the stripes if the stripes are stretched, while stripes are not as sensitive to compression. The canonical example is the Swift-Hohenberg equation which has not advection or quadratic terms. The main point of the next theorem is to exhibit the effect of advection through $\beta$ and also the role of  quadratic terms in the system.

\begin{theorem}[Zigzag instability]\label{t:zigzag}
For $\mu$ such that the stripe solution \eqref{e:Stripes} with the amplitude $A(\mu)>0$ exists in \eqref{e:RDS}, the curve of spectrum of $\calT$ for $\gamma=0$ and $|\ell|\ll1$ attached to the origin is given by
\begin{align}\label{e:speczz}
	\lambda_\zz(\ell) = \left(\kc\rho_\kap\kap + \rho_\calpha\alpha + \rho_{\bbeta}\beta^2 +\calR_\zz\right)\ell^2,
\end{align}
with $\rho_\kap$ as in \S\ref{s:bif}, and
\begin{equation}\label{e:coeffzz}
	\begin{aligned}
	\rho_\calpha &:= -a_\M\kcsq(\langle D\E_0,w_{A\calpha}^*\rangle +\langle D w_{A\calpha},\E_0^*\rangle)/\lambda_\M - q_{22}/\rho_\nl,\\
	\rho_{\bbeta} &:= -\kcsq(\langle D\E_0,w_{A\beta\beta}^*\rangle +\langle Dw_{A\beta\beta},E_0^*\rangle-\langle Dw_{A\beta}, w_{A\beta}^* \rangle) - q_{22}\rho_\beta/\rho_\nl,\\
	q_{22} &:= -\kcsq\langle D\Q_2,\Q_2^*\rangle, \quad\calR_\zz=\calO(a_\M\alpha^2+\kap^2+a_\M|\alpha|\beta^2+\ell^2),
	\end{aligned}
\end{equation}
	where $a_\M=0$ if $M={\rm Id}$, $a_\M=1$ otherwise.
\end{theorem}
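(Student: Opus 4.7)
My plan is to apply the general formula \eqref{e:seconddifell}, namely $\lambda_\zz(\ell) = -\kappa^2\langle DV_0,V_0^*\rangle\ell^2 + \calO(\ell^4)$, and to expand the inner product $\langle DV_0,V_0^*\rangle$ to leading order in $\mu$ and $A$, extracting the stated coefficients. The crucial preliminary observation is that $\langle D\E_0,\E_0^*\rangle = 0$ at $\mu=0$: using the explicit formulas for $\E_0,\E_0^*$ from Remark~\ref{r:ev} together with the Turing identities $a_2a_3=(a_1-\kcsq d_1)(a_4-\kcsq d_2)$ and $d_1a_4+d_2a_1=2d_1d_2\kcsq$ from Remark~\ref{r:Turing}, the numerator $d_1a_2a_3+d_2(a_1-\kcsq d_1)^2$ factors through $d_1a_4+d_2a_1-2d_1d_2\kcsq = 0$. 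Hence the zeroth-order part of $-\kappa^2\langle DV_0,V_0^*\rangle$ vanishes and the leading behaviour comes entirely from the $\mu,A$-dependent corrections to $V_0$ and $V_0^*$.

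I take $V_0=\partial_xU_\rms$ using the real-form expansion displayed just before Theorem~\ref{t:zigzag}, and construct $V_0^*\in\ker\calT_0^*$ by an analogous Lyapunov--Schmidt ansatz applied to the transposed operator $\calT_0^*$, obtained from $\calT_0$ by transposing $\A,\M,\Q,\K$ and reversing the sign of the advection term $\beta\kappa\B\partial_x$. This yields an expansion of the same shape as $V_0$, namely
\begin{align*}
V_0^*\ \propto\ & -2\sin x\bigl(\E_0^*+\kap w_{A\kap}^*+\calpha w_{A\calpha}^*+\beta^2 w_{A\beta\beta}^*\bigr)+2\beta\cos x\, w_{A\beta}^*\\
& -2A\sin(2x)\,\hQ^* + \hot,
\end{align*}
where the sign flip in the $\cos x$-coefficient (relative to $V_0$) reflects the $\beta$-reversal in $\calT_0^*$. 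Normalising so that $\langle V_0,V_0^*\rangle=1$ makes $\langle DV_0,V_0^*\rangle$ invariant under $V_0\mapsto cV_0,\,V_0^*\mapsto V_0^*/c$ and the overall $A$-factor drops out. Projecting on the orthogonal Fourier modes $\sin x$, $\cos x$, $\sin(2x)$ one obtains a $\kap$-piece $\langle Dw_{A\kap},\E_0^*\rangle+\langle D\E_0,w_{A\kap}^*\rangle$, an analogous $\calpha$-piece in $w_{A\calpha},w_{A\calpha}^*$, a $\beta^2$-piece $\langle Dw_{A\beta\beta},\E_0^*\rangle+\langle D\E_0,w_{A\beta\beta}^*\rangle-\langle Dw_{A\beta},w_{A\beta}^*\rangle$ (the minus sign coming from the $\cos x$-sector via the sign flip above), and an $A^2$-piece proportional to $\langle D\hQ,\hQ^*\rangle=-q_{22}/\kcsq$.

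To close the argument I substitute the amplitude equation \eqref{e:amp}, $A^2 = -(\alpha+\rho_\beta\beta^2+\rho_\kap\kap^2)/\rho_\nl + \calO(A^3)$, which converts the $A^2$-piece into the $-q_{22}/\rho_\nl$ and $-q_{22}\rho_\beta/\rho_\nl$ contributions to $\rho_\calpha$ and $\rho_\bbeta$ respectively (the $\rho_\kap\kap^2$ piece is absorbed into $\calR_\zz$). Multiplying by $-\kappa^2 = -\kcsq-2\kc\kap+\calO(\kap^2)$, invoking $\rho_\kap=-2\kc\langle Dw_{A\kap},\E_0^*\rangle$ from \eqref{e:stripecoeffalt}, and converting $\calpha$ to $\alpha$ via division by $\lambda_\M$ (the factor $a_\M$ reflects that $w_{A\calpha}$ vanishes when $\M=\Id$, so the direct contribution to $\rho_\calpha$ disappears) reproduces the claimed $\kc\rho_\kap\kap+\rho_\calpha\alpha+\rho_\bbeta\beta^2$. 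The main technical obstacle is constructing $V_0^*$ with the correct asymmetries: unlike $\rho_\kap$ where direct and adjoint contributions could in principle be symmetrised, $\rho_\calpha$ and $\rho_\bbeta$ must keep them separate since $\calpha\M$ and $\beta\B$ are not self-adjoint, and the advection sign reversal in $\calT_0^*$ is precisely what generates the nonobvious $-\langle Dw_{A\beta},w_{A\beta}^*\rangle$ in $\rho_\bbeta$. Careful Fourier-mode orthogonality bookkeeping and remainder estimates matching $\calR_\zz=\calO(a_\M\alpha^2+\kap^2+a_\M|\alpha|\beta^2+\ell^2)$ then complete the proof.
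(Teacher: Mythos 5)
Your proposal follows essentially the same route as the paper's proof: starting from \eqref{e:seconddifell} with $V_0=\partial_xU_\rms$, expanding the adjoint kernel element $V_0^*$ (with the sign flip in the $\beta\cos x$ term from the reversed advection), using $\langle D\E_0,\E_0^*\rangle=0$ and Fourier orthogonality to isolate the $\kap$, $\calpha$, $\beta^2$ and $A^2$ contributions, and then eliminating $A^2$ via the amplitude equation and expanding $\kappa=\kc+\kap$. The details you sketch (including the verification of $\langle D\E_0,\E_0^*\rangle=0$ from Remark~\ref{r:Turing} and the correct placement of the $a_\M$ factor only on the direct $w_{A\calpha}$ contribution) match the paper's computation, so the proposal is correct.
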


The proof is presented in Appendix \ref{s:zigzagproof}. The theorem in particular shows that $\lambda_\zz$ depends to quadratic order on the advection parameter $\beta$. In particular, for $\rho_\calpha\neq 0$, the theorem gives the zigzag stability boundary to leading order as 
\begin{align}\label{e:zzbnd}
\alpha = \calZ(\kap,\beta) = -(\kc\rho_\kap\kap + \rho_\bbeta\beta^2)/\rho_\calpha.
\end{align}

Recall that $A=0$ holds for a surface in $\mu$-space that includes $\mu=0$. The natural scalings discussed after \eqref{e:ampfull} give the following reduced spectrum.
\begin{corollary}\label{c:zzscaling}
Assume the conditions in Theorem \ref{t:zigzag} and the scalings $\alpha=A^2\alpha'$, $\beta=A\beta'$, $\kap=A\kap'$, then the curve of spectrum of $\calT$ for $\gamma=0$ and $|\ell|\ll1$ attached to the origin is given by
\[
\lambda_\zz(\ell) = A\left(\kc\rho_\kap\kap' + \calO(|A|)\right)\ell^2.
\]
\end{corollary}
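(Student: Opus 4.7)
The plan is a direct substitution of the scalings $\alpha=A^2\alpha'$, $\beta=A\beta'$, $\kap=A\kap'$ into the expansion \eqref{e:speczz} of Theorem \ref{t:zigzag}. These are the natural scalings highlighted after \eqref{e:ampfull}, consistent with the amplitude relation $A^2\sim -\Re(\tlam(\mu))/\rho_\nl$ on the bifurcation surface, so that $\alpha,\beta^2,\kap^2$ and $A^2$ sit at the same order.

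First I would examine the three explicit terms in the $\ell^2$-coefficient of $\lambda_\zz$. Substitution turns $\kc\rho_\kap\kap$ into $A\kc\rho_\kap\kap'$, which produces both the advertised leading contribution and the overall factor of $A$ that we will pull outside. The remaining two terms, $\rho_\calpha\alpha$ and $\rho_\bbeta\beta^2$, evaluate to $A^2$ times an $\calO(1)$ quantity, hence contribute $A\cdot\calO(|A|)$ after factoring out $A$.

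Next I would bound the remainder $\calR_\zz=\calO(a_\M\alpha^2+\kap^2+a_\M|\alpha|\beta^2+\ell^2)$ term by term under the scaling. We have $\alpha^2=\calO(A^4)$, $\kap^2=\calO(A^2)$ and $|\alpha|\beta^2=\calO(A^4)$, so the $\mu$-dependent portion is $\calO(A^2)$, hence $\calO(|A|)$ after dividing by $A$. The remaining $\calO(\ell^2)$ piece of $\calR_\zz$ contributes an $\calO(\ell^4)$ correction to $\lambda_\zz$, which fits inside the $\calO(|A|)\ell^2$ term for $|\ell|$ small. Collecting everything and factoring $A$ out of the $\ell^2$-coefficient then yields $\lambda_\zz(\ell)=A(\kc\rho_\kap\kap'+\calO(|A|))\ell^2$, as claimed.

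Since the corollary is essentially bookkeeping on top of Theorem \ref{t:zigzag}, there is no real obstacle; the only point requiring care is verifying that each monomial in the remainder is genuinely of order $A^2$ or higher under the $\mu=\mu(A,\mu')$ scaling. The takeaway that the statement is meant to emphasise, and which the proof should make explicit, is that in the natural scaling regime the stretching parameter $\kap$ dominates the zigzag curvature, while the $\rho_\calpha\alpha$ and $\rho_\bbeta\beta^2$ contributions are demoted to the $\calO(|A|)$ correction.
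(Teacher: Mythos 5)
Your proof is correct and is essentially the paper's own argument: the corollary is obtained exactly by substituting the scalings $\alpha=A^2\alpha'$, $\beta=A\beta'$, $\kap=A\kap'$ into \eqref{e:speczz} and checking that $\rho_\calpha\alpha$, $\rho_\bbeta\beta^2$ and every monomial in $\calR_\zz$ is $\calO(A^2)$, so that only $\kc\rho_\kap\kap'$ survives at order $A$. The bookkeeping you perform (including absorbing the $\calO(\ell^2)$ piece of $\calR_\zz$ for small $|\ell|$) matches what the paper leaves implicit, so there is nothing to add.
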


Here the zigzag stability boundary is given by $\kap=0$ to leading order, independent of the advection, cf. Fig.~\ref{f:zzbet0}\,\&\,\ref{f:zzbetn0ralp0} (green lines).
\begin{figure}[h!]
	\centering
	\subfloat[$\beta=0$, $\rho_\calpha\neq0$]{\includegraphics[width=0.25\linewidth]{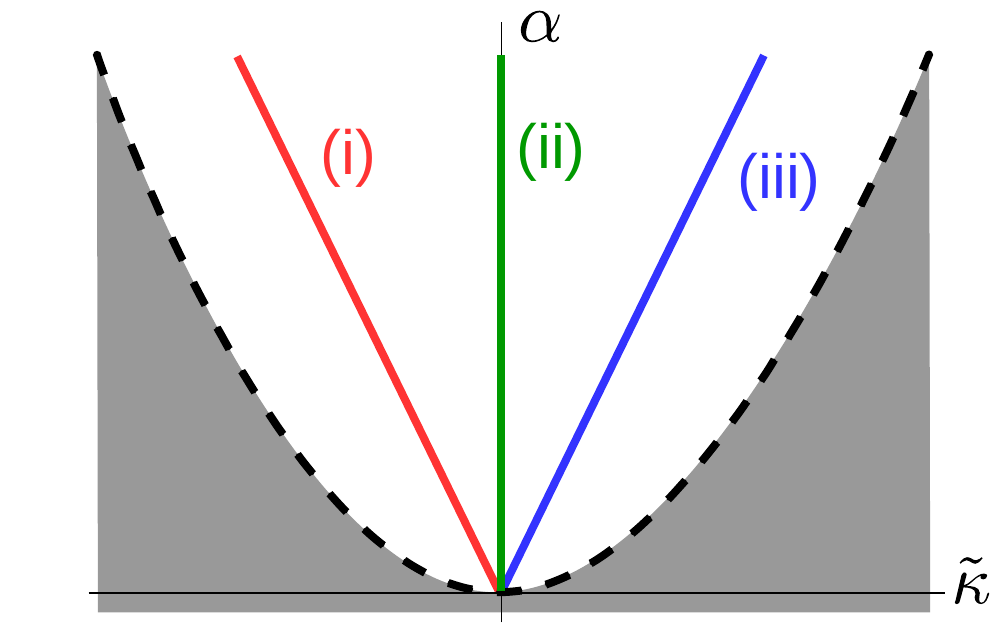}\label{f:zzbet0}}
	\hfil
	\subfloat[$\beta\neq0$, $\rho_\calpha=0$]{\includegraphics[width=0.25\linewidth]{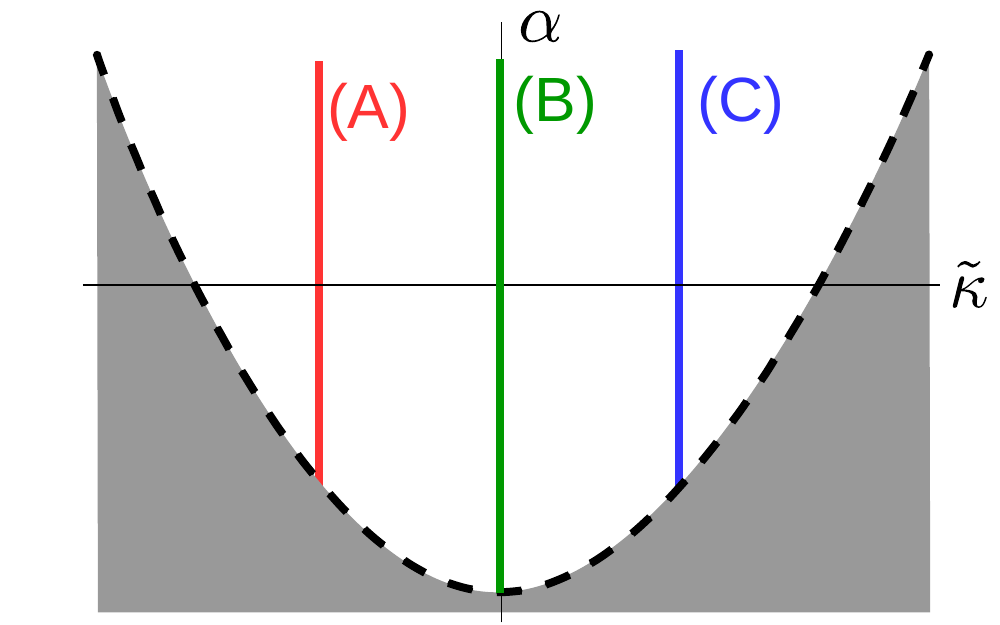}\label{f:zzbetn0ralp0}}
	\hfil
	\subfloat[$\beta\neq0$, $\rho_\calpha<0$]{\includegraphics[width=0.25\linewidth]{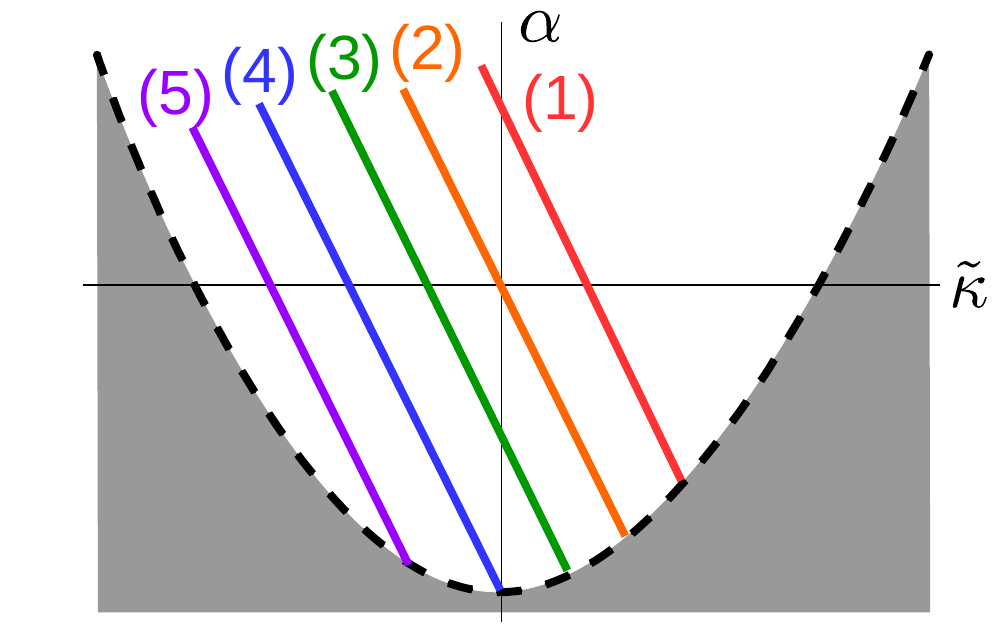}\label{f:zzn}}
	\hfil
	\subfloat[$\beta\neq0$, $\rho_\calpha>0$]{\includegraphics[width=0.25\linewidth]{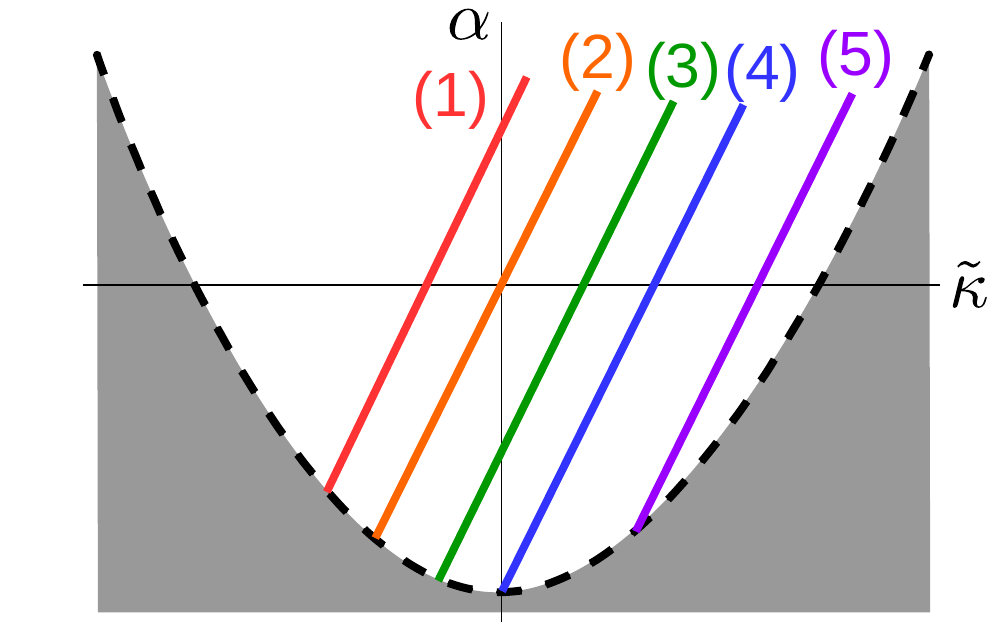}\label{f:zzp}}
	\caption{Sketches of the different leading order zigzag boundaries in the $(\kap,\alpha)$-plane. Stripes exist in the white regions. Dashed curves: bifurcation curves; coloured solid lines: zigzag boundaries. The zigzag unstable region lies to the left of each zigzag boundary. In (a): (i) $\rho_\calpha<0$, (ii) $\rho_\calpha=0$, zigzag boundary is $\kap=0$, (iii) $\rho_\calpha>0$. In (b): (A) $\rho_\bbeta<0$, (B) $\rho_\bbeta=0$, zigzag boundary is $\kap=0$, (C) $\rho_\bbeta>0$. In (c) and (d): (1) $\calZ(0,\beta)>0$ ($\rho_\bbeta/\rho_\calpha<0$), (2) $\calZ(0,\beta)=0$ ($\rho_\bbeta=0$), (3) $\calB(0,\beta)<\calZ(0,\beta)<0$ ($0<\rho_\bbeta/\rho_\calpha<\rho_\beta$), (4) $\calZ(0,\beta)=\calB(0,\beta)$ ($\rho_\bbeta/\rho_\calpha=\rho_\beta$), (5) $\calZ(0,\beta)<\calB(0,\beta)$ ($\rho_\bbeta/\rho_\calpha>\rho_\beta$).}
	\label{f:zzfull}
\end{figure}

Relaxing these scalings assumptions yields a variety of zigzag stability boundaries depending on the signs of $\rho_\calpha$ and $\rho_\bbeta$, cf. Fig.~\ref{f:zzfull}. Nonzero $\rho_\calpha$ creates a sloping zigzag boundary and nonzero $\rho_\bbeta$ shifts the zigzag boundary horizontally. As mentioned in Fig.~\ref{f:sidebandintrobetn0}, the attachment point of the zigzag boundary to the bifurcation loci can be moved and rotated relative to $\kap=0$. The bifurcation curve at $\kap=0$ lies at
\[
\alpha=\calB(0,\beta) = -\rho_\beta\beta^2,
\] 
and the zigzag boundary at $\kap=0$ lies at
\[
\alpha =\calZ(0,\beta)=-\frac{\rho_\bbeta}{\rho_\calpha}\beta^2
\]
for $\rho_\calpha\neq0$. Hence we can compare $\calB(0,\beta)$ and $\calZ(0,\beta)$ and obtain the more accurate positions of the zigzag boundaries near the bifurcation curve and close to $\kap=0$, cf. Fig.~\ref{f:zzfull}.

Notably, the term $q_{22}$ related to the quadratic form $\Q$ appears in both $\rho_\calpha$ and $\rho_\bbeta$. In particular, vanishing quadratic form $\Q=0$ gives $q_{22}=0$.

\begin{remark}\label{r:rbb}
For $Q=0$ we have $q_{22}=0$, and from Remark~\ref{r:ev}, as well as \eqref{e:defs} and \eqref{e:stripev} a tedious computation gives
\begin{align*}
\rho_{\bbeta}= \frac{{\bf k}_{\rm c}^4b_3^2d_1}{b_1^2(b_1+b_4)^4}b_4(5b_1+b_4).
\end{align*}
Recall $b_1b_4 =b_2b_3<0$, $b_1+b_4<0$ and $a_1<0$ implies $b_1<0$. Hence, for all sufficiently small coefficients in $\Q$, we have $\rho_{\bbeta}<0$ for either $a_1<0$, or $a_1>0$ and $a_1>\kcsq d_1-(a_4-\kcsq d_2)/5$, $\rho_{\bbeta}>0$ otherwise.
\end{remark}

\medskip
From Theorem~\ref{t:bif} we know that the bifurcation curve  for $\kap=0$ in the $(\beta,\alpha)$-plane is to leading order given by $\alpha=-\rho_\beta\beta^2$. In order to study the stability at the onset of bifurcation, let us consider $\talpha:=\alpha+\rho_\beta\beta^2$ so the bifurcations occur at $\talpha=0$ in the $(\beta,\talpha)$-plane. It follows that
\begin{align}
\lambda_\zz(\ell) &= \left(\kc\rho_\kap\kap + \rho_\calpha(\talpha - \rho_\beta\beta^2) + \rho_{\bbeta}\beta^2 +\calR_\zz\right)\ell^2\nonumber\\
&= \left(\kc\rho_\kap\kap + \rho_\calpha\talpha + (\rho_{\bbeta}-\rho_\calpha\rho_\beta)\beta^2 +\calR_\zz\right)\ell^2,\label{e:speczz2}
\end{align}
and thus the zigzag boundary for $\rho_\calpha\neq0$ is given by
\begin{align}\label{e:zzbndtalp}
\talpha = -(\kc\rho_\kap\kap + (\rho_\bbeta-\rho_\calpha\rho_\beta)\beta^2)/\rho_\calpha.
\end{align}
This dependence on $\beta$ shows that the advection influences the form of the zigzag stability boundary near the bifurcation.

\bigskip
\paragraph{Eckhaus instability} It is well known that a supercritical Turing bifurcation for $\beta=\kap=0$ implies stable Eckhaus sideband, and we next determine the expansion including $\beta,\kap$. Recall the Eckhaus instability arises from perturbations that vary only in $x$-direction, i.e., $p=\gamma$ and $\ell=0$. 

\begin{theorem}[Eckhaus instability]\label{t:Eckhaus}
For $\mu$ such that the stripe solution \eqref{e:Stripes} with amplitude $A(\mu)>0$ exists in \eqref{e:RDS}, the curve of spectrum of $\calT$ for $\ell=0$ and $|\gamma|\ll1$ attached to the origin is given by
\begin{align*}
	\lambda_\eh =\rmi\kc\left((\lambda_{\kap\beta}-\lambda_{\beta})\beta + \calO(A^2)\right)\gamma -\kcsq\frac{\rho_\kap}{\rho_\nl}A^{-2}\left(\alpha+\rho_\beta\beta^2+3\rho_\kap\kap^2
	+\calR_\eh)\right)\gamma^2,
\end{align*}
with $\calR_\eh:=\calO(\mu^2 + A^2|\mu| + A^4 + |\gamma|)$.
\end{theorem}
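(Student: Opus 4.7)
The plan is to apply the Fredholm-alternative formulae \eqref{e:gamfirstdiff} and \eqref{e:seconddigfam} with $p=\gamma$, and expand the translation mode $V_0=\partial_x U_\rms$ and its adjoint $V_0^*$ using the stripe representation \eqref{e:Stripes} and the coefficients in \eqref{e:defs}.

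For the linear-in-$\gamma$ coefficient, I observe that at $\beta=0$ the reflection $x\mapsto -x$ sends $V_0\mapsto -V_0$, $V_0^*\mapsto -V_0^*$, and $\partial_x\mapsto -\partial_x$, so $\langle 2\kappa D\partial_x V_0,V_0^*\rangle$ vanishes and only $\calO(|\beta|)$ survives from $\langle\beta\kappa B V_0,V_0^*\rangle$. Evaluating this using the $\beta$-correction $-2A\beta w_{A\beta}\cos x$ to $V_0$ (and its adjoint analogue) and substituting the velocity expansion \eqref{e:stripev} into the formulae \eqref{e:stripecoeffalt} for $\gamma_\beta,\gamma_{\kap\beta}$ consolidates the leading contribution into the comoving group-minus-phase-velocity combination $\kc(\lambda_{\kap\beta}-\lambda_\beta)\beta+\calO(A^2)$ claimed in the theorem.

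The main work lies in the $\gamma^2$ coefficient, which requires solving the singular equation $\calT_0(\partial_\gamma V)_0=(\partial_\gamma\lambda)_0 V_0-(\partial_\gamma\calT)_0 V_0$ from \eqref{e:firstdiff}. The crucial point is that $\calT_0$ is a small perturbation of $\calL_\mu$, whose restriction to $2\pi$-periodic functions has a two-dimensional near-kernel spanned to leading order by $\E_0 e^{\pm\rmi x}$: one direction becomes the exact kernel spanned by $V_0$, while the orthogonal \emph{amplitude mode} acquires a small nonzero eigenvalue equal, by the pitchfork Lyapunov--Schmidt reduction underlying Theorem~\ref{t:bif}, to $2\rho_\nl A^2+\calO$ (negative since $\rho_\nl<0$). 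Consequently the pseudoinverse of $\calT_0$ is uniformly bounded on the complement of the amplitude direction but blows up like $A^{-2}$ along it. Projecting $(\partial_\gamma\calT)_0 V_0=2\rmi\kappa^2 D\partial_x V_0+\rmi\beta\kappa B V_0$ onto the amplitude adjoint picks up a leading factor of order $\rho_\kap\kap$ (the naive leading contribution $\langle D\E_0,\E_0^*\rangle$ vanishing thanks to the Turing identity $\kcsq=(d_1a_4+d_2a_1)/(2d_1d_2)$, with the next-order contribution coming from $\kappa^2=\kc^2+2\kc\kap+\ldots$ and the $\kap w_{A\kap}$-term in $V_0$), so $(\partial_\gamma V)_0$ carries a component of order $\kap/A$ along the amplitude direction.

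Substituting $(\partial_\gamma V)_0$ into \eqref{e:seconddigfam} produces three terms: the direct term $-2\kappa^2\langle DV_0,V_0^*\rangle$ of size $\calO(1)$; a term proportional to $(\partial_\gamma\lambda)_0=\calO(|\beta|)$ absorbed into $\calR_\eh$; and the dominant $2\langle(\partial_\gamma\calT)_0(\partial_\gamma V)_0,V_0^*\rangle$ whose amplitude-direction piece is of order $\kap^2/A^2$. Combining the direct and singular contributions and invoking the amplitude relation \eqref{e:ampfull}, $-\rho_\nl A^2=\alpha+\rho_\beta\beta^2+\rho_\kap\kap^2+\calO$, to recast bounded quantities as $A^{-2}$ times the bifurcation polynomial yields exactly $-\kcsq\rho_\kap A^{-2}(\alpha+\rho_\beta\beta^2+3\rho_\kap\kap^2)/\rho_\nl\,\gamma^2+\calR_\eh\gamma^2$; the specific ``$3$'' emerges from one factor of the bifurcation polynomial from the direct term plus two additional factors of $\rho_\kap\kap^2$ from the singular inversion, in agreement with the standard Ginzburg--Landau Eckhaus analysis. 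The principal obstacle is this singular inversion together with the delicate bookkeeping; a clean implementation is a two-mode Lyapunov--Schmidt reduction onto $\mathrm{span}\{V_0,\widetilde V_0\}$ with $\widetilde V_0$ an amplitude eigenmode of $\calT_0$, turning \eqref{e:evalstripe} at $\ell=0$ into a $2\times 2$ matrix eigenvalue problem in $\gamma$ whose $\lambda_\eh$-branch follows from a Schur-complement expansion and absorbs all remaining higher-order errors into $\calR_\eh=\calO(\mu^2+A^2|\mu|+A^4+|\gamma|)$.
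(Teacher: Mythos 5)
Your structural picture is sound and is genuinely different from the paper's route: the paper never solves the singular equation for $(\partial_\gamma V)_0$ by spectral splitting of $\calT_0$; instead it differentiates the existence problem with respect to $\kap$, cf.\ \eqref{e:dkapUs}, obtaining both the first-order coefficient (from the solvability condition, $(\partial_\gamma\lambda)_0=\rmi\kappa^2\beta\,\partial_\kap c$) and the identity $(\partial_\gamma V)_0=\rmi\kappa\,\partial_\kap U_\rms+aV_0$, so that the whole $\gamma^2$-coefficient reduces to inner products with $\partial_\kap U_\rms$, whose singular $A^{-2}$ behaviour enters only through $\partial_\kap A=-(2\rho_\kap\kap+\lambda_{\M\kap}a_\M\alpha)/(2\rho_\nl A)$. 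Your two-mode Lyapunov--Schmidt/Schur-complement plan with the amplitude eigenvalue $2\rho_\nl A^2$ could be made to work and would reproduce the same structure, and your direct symmetry-plus-expansion argument for the $\calO(\gamma)$ term is also viable (it reduces to $-2\kc\beta(\langle D\E_0,w_{A\beta}^*\rangle+\langle Dw_{A\beta},\E_0^*\rangle)$, which matches $\kc(\lambda_{\kap\beta}-\lambda_\beta)\beta$ via \eqref{e:stripecoeffalt}), though the paper gets it for free.

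However, there is a concrete gap in your accounting of the $\gamma^2$-coefficient, which is the heart of the theorem. The ``direct'' term $-2\kappa^2\langle DV_0,V_0^*\rangle$ is \emph{not} $\calO(1)$: precisely because $\langle D\E_0,\E_0^*\rangle=0$ it is $\calO(|\mu|+A^2)$ (cf.\ \eqref{e:seconddifzigzag}) and lands entirely in the remainder $\calR_\eh$; it does not supply ``one factor of the bifurcation polynomial''. The $\calO(1)$ backbone of the curvature --- which after substituting $-\rho_\nl A^2=\alpha+\rho_\beta\beta^2+\rho_\kap\kap^2+\dots$ from \eqref{e:stripeeqn} produces the single copy of the bifurcation polynomial that combines with the $2\rho_\kap\kap^2$ from the singular amplitude-mode inversion to give the ``$3$'' --- comes from the \emph{regular} (range) component of $(\partial_\gamma V)_0$, i.e.\ the wavenumber-induced shape correction $w_{A\kap}$ of the stripe, entering through $\langle Dw_{A\kap},\E_0^*\rangle=-\rho_\kap/(2\kc)$ (in the paper, the $2Aw_{A\kap}\cos x$ part of $\rmi\kappa\,\partial_\kap U_\rms$ in \eqref{e:seconddiflead}). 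Your proposal never identifies or computes this contribution, and instead the final coefficient $-\kcsq\rho_\kap\rho_\nl^{-1}A^{-2}(\alpha+\rho_\beta\beta^2+3\rho_\kap\kap^2)$ is asserted ``in agreement with the standard Ginzburg--Landau Eckhaus analysis'' rather than derived; to close the argument you must compute the off-kernel part of the inversion (or use the paper's identity $(\partial_\gamma V)_0=\rmi\kappa\,\partial_\kap U_\rms+aV_0$, which packages the regular and singular pieces simultaneously) and verify that the prefactor of the singular piece is exactly $-\rho_\kap/\rho_\nl$ per unit $\kap^2/A^2$, since the claimed Eckhaus boundary hinges on the precise ratio between these two contributions.
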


See Appendix \ref{s:Eckhausproof} for the proof and revisit Fig.~\ref{f:sidebandintro} for the (un)stable regions. Here we have simplified the estimate of $\calR_\eh$ -- more details can be found in the proof.

Hence, the Eckhaus stability boundary is given to leading order by 
\begin{align}\label{e:Eckhausbnd}
	 \alpha = \calE(\kap,\beta) = - 3\rho_\kap\kap^2-\rho_\beta\beta^2.
\end{align}
We note that for $\kap=0$ this is $\calE(0,\beta) =-\rho_\beta\beta^2 = \calB(0,\beta)$. Moreover, since $\rho_\kap<0$, we have  $\calE(\kap,\beta) \geq \calB(\kap,\beta)$ so that, as usual, the Eckhaus boundary touches the bifurcation curve at $\kap=0$ and lies in the existence region of stripes. Therefore, for $\kap=0$ the bifurcating stripes are Eckhaus stable and unstable otherwise. 

Analogous to the zigzag stability, we consider $\talpha:=\alpha+\rho_\beta\beta^2$ so that 
\begin{align}
\Re(\lambda_\eh) =-\kcsq\frac{\rho_\kap}{\rho_\nl}A^{-2}\left(\talpha+3\rho_\kap\kap^2+\calR_\eh)\right)\gamma^2,
\end{align}
and the Eckhaus boundary becomes
\begin{align}\label{e:ehbndtalp}
\talpha = -3\rho_\kap\kap^2,
\end{align}
which is independent on $\beta$ to leading order -- in contrast to the zigzag boundary. Hence, the leading order effect of advection through $\beta$ is just a translation of the Eckhaus boundary downwards ($\rho_\beta>0$) with order $\beta^2$. In other words, for any fixed $\alpha$ in the existence region, the width of Eckhaus stable region increases with $|\beta|$. The advection can well influence the Eckhaus stability at higher order, cf. \eqref{e:ehhot}, but an analysis of this is beyond the scope of this paper.

\section{Examples}\label{s:example}

\subsection{Exact example: zigzag-unstable stripes}

For illustration of the expansions we consider the concrete system
\begin{equation}\label{e:example}
	\begin{aligned}
	u_t &= \Delta u + 3u - v + \calpha u + 4\calpha v + \beta u_x + \frac{1}{2}u^2+\frac{1}{8}v^2- uv^2\\
	v_t &= \frac{7}{2}\Delta v + 14u - \frac{7}{2}v  -\frac{1}{5}\calpha u +\calpha v + \frac{1}{2}u^2+\frac{1}{8}v^2 + uv^2
	\end{aligned}
\end{equation}
where $U:= (u,v)^T$, $D = \diag(1,7/2)$,
\begin{align*}
	\A = \begin{pmatrix} 3 & -1\\ 14 & -\frac{7}{2} \end{pmatrix}\!,\; \M = \begin{pmatrix} 1 & 4\\ -\frac{1}{5} & 1 \end{pmatrix}\!,\; \Q[U,U] = \frac{1}{2}\begin{pmatrix} u^2+\frac{1}{4}v^2 \\ u^2+\frac{1}{4}v^2 \end{pmatrix}\!,\; \K[U,U,U] = \begin{pmatrix} -uv^2 \\ uv^2 \end{pmatrix}\!.
\end{align*}
The generic form of $\Q$ is given by $\Q[U_1,U_2] = (\Q_|[U_1,U_2],\Q_{||}[U_1,U_2])^T$ with
\[
\Q_|[U_1,U_2] = \Q_{||}[U_1,U_2] = U_1^T \begin{pmatrix}
\frac{1}{2} & 0 \\ 0 & \frac{1}{8}
\end{pmatrix} U_2,
\]
where $U_j:= (u_j,v_j)^T$, $j = 1,2,3$. 

In this system, the Turing conditions are fulfilled and the critical wavevectors $(k,\ell)\in S_\kc$ with $\kc = 1$. We have
\begin{align*}
	\hat\calL_0 := -\kcsq D+\A = 
	\begin{pmatrix}
	2 & -1\\
	14 & -7
	\end{pmatrix}.
\end{align*}
From Remark~\ref{r:ev} the rescaled kernel eigenvector of $\hat\calL_0$ and its adjoint kernel eigenvector are given by
\begin{align*}
	\E_0 = -\frac{1}{\sqrt{5}}(1,2)^T,\quad 
	\E_0^* = \frac{1}{\sqrt{5}}(-7,1)^T.
\end{align*}
We examine the coefficients in \eqref{e:defs}, \eqref{e:stripecoeffalt} and \eqref{e:coeffzz} so that they are nonzero. The bifurcation curves, zigzag and Eckhaus boundaries are given by, cf.\ Fig.~\ref{f:example},
\begin{align}
\text{bifurcation curve:}\quad & \alpha = -\frac{14}{125}\beta^2+\frac{14}{5}\kap^2,\label{e:egbif}\\
\text{Eckhaus boundary:}\quad & \alpha=-\frac{14}{125}\beta^2+\frac{42}{5}\kap^2,\label{e:egeh}\\
\text{zigzag boundary:}\quad & \alpha = -\frac{952}{267125}(13\beta^2 + 875\kap).\label{e:egzz}
\end{align}

The striped solutions exist for $\alpha>-\frac{14}{125}\beta^2+\frac{14}{5}\kap^2$. In Fig.~\ref{f:stripe} we plot the leading order form of a stripe based on \eqref{e:Stripes} for $\alpha=0.2$, $\beta=0.7$ and $\kap=0.1$, which gives the velocity parameter $c=-7/5$.
\begin{figure}[h!]
\centering
\includegraphics[width=0.45\linewidth]{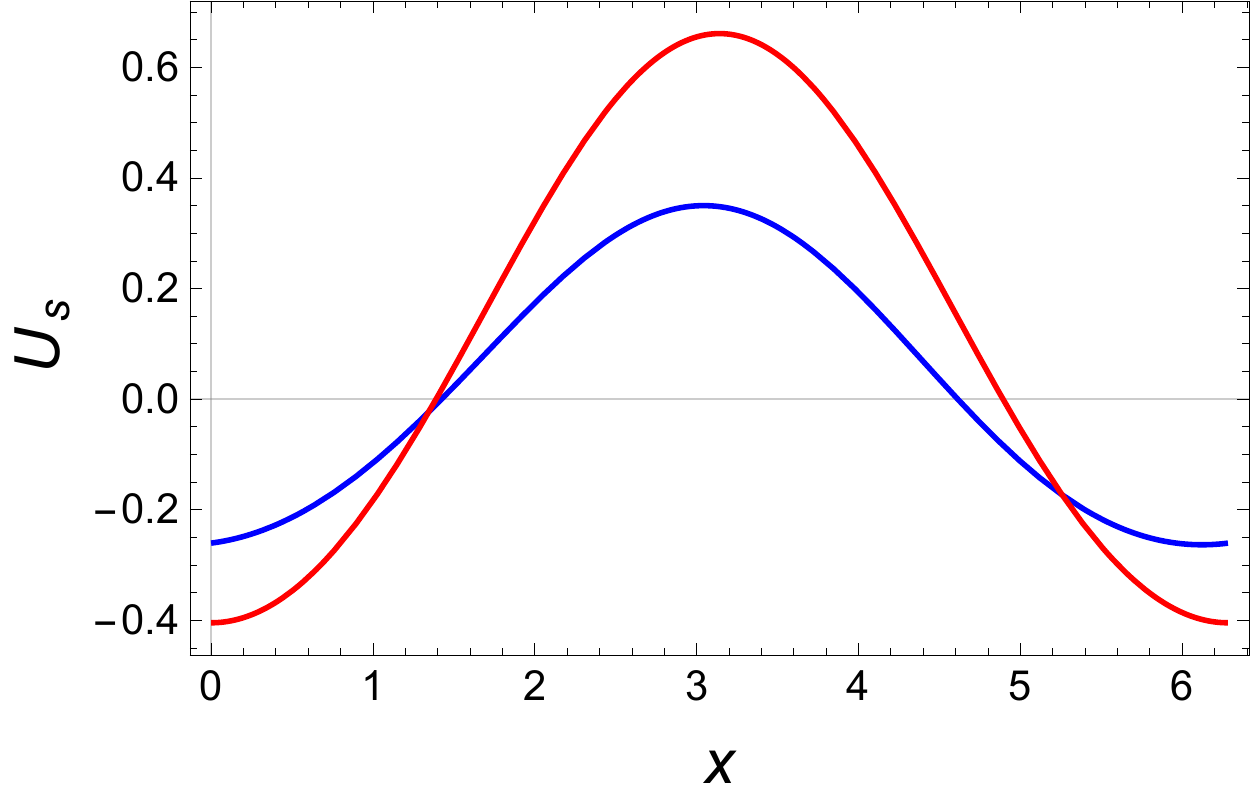}
\caption{The leading order of the rescaled striped solution $U_\rms$ ($u$-component blue, $v$-component red) in $x\in[0,2\pi]$ to the system \eqref{e:RDS} for $\alpha=0.2$, $\beta=0.7$ and $\kap=0.1$.}
\label{f:stripe}
\end{figure}

The advection term shifts the bifurcation curve and the Eckhaus boundary downwards since the coefficient of $\beta^2$ are both negative in \eqref{e:egbif} and \eqref{e:egeh}, cf. Fig.~\ref{f:example}. Thus the advection stabilises the large wavelength perturbations in the $x$-direction. 

The negative coefficient of $\kap$ in \eqref{e:egzz} adds a negative value to the slope of the zigzag boundary, cf. Fig.~\ref{f:example}. The negative coefficient of $\beta^2$ shifts the zigzag boundary to the left, cf. Fig.~\ref{f:sbbet0}\,\&\,\ref{f:sbbetn0}. Hence the advection stabilises the large wavelength perturbations in the $y$-direction for any $\alpha>0$. Since the coefficient of $\beta^2$ in \eqref{e:egzz} is larger than that of \eqref{e:egbif}, however, there exists a zigzag unstable region near the bifurcation curve and for $\kap>0$, cf.\ Fig.~\ref{f:zzn}. We plot the resulting curves in Fig.~\ref{f:sbbetn0}. In particular, the width of this unstable region is of order $\beta^2$. Hence the advection destabilises the large wavelength perturbations in the $y$-direction at the onset of Turing bifurcation. This can also be seen from the positive coefficient of $\beta^2$ in \eqref{e:speczz2}.

\begin{figure}[h!]
\centering
\subfloat[$\beta=0$]{\includegraphics[width=0.45\linewidth]{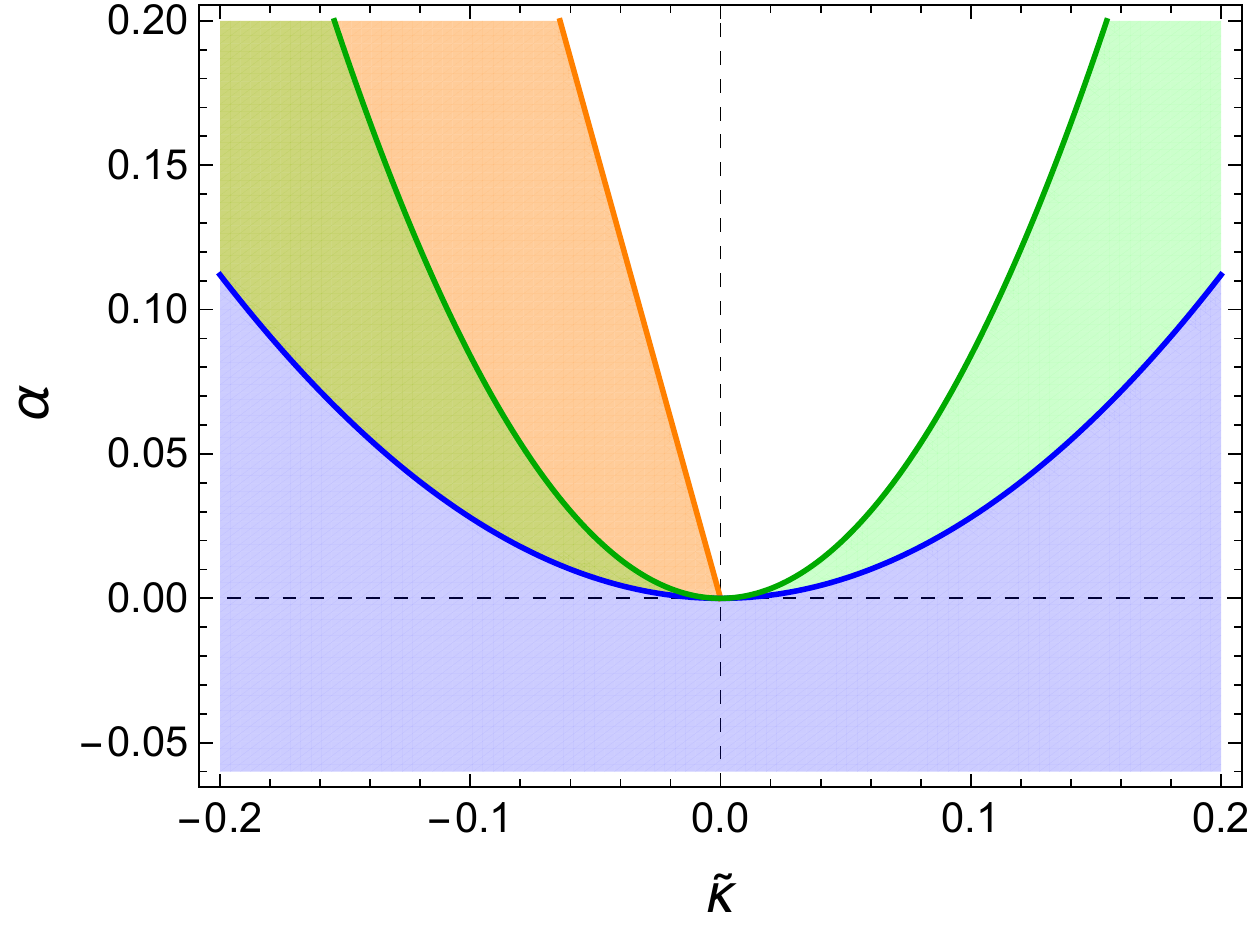}\label{f:sbbet0}}
\hfil
\subfloat[$\beta=0.7$]{\includegraphics[width=0.45\linewidth]{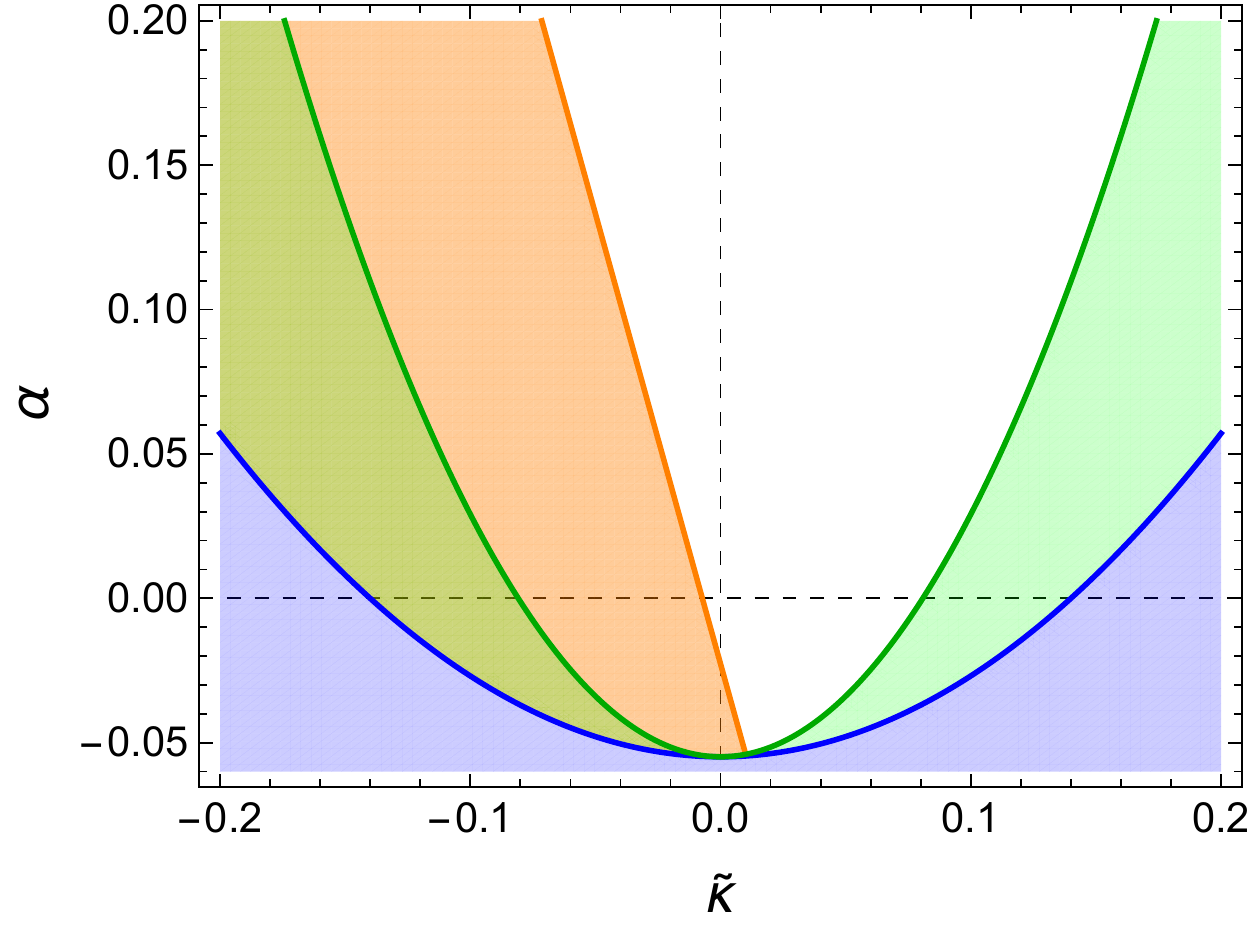}\label{f:sbbetn0}}
\caption{Numerical computations of the leading order Eckhaus and zigzag (in)stability regions of the stripes for \eqref{e:example} in the $(\kap,\alpha)$-plane. Stripes exist in the complement of the blue regions. Blue lines: bifurcation curves \eqref{e:egbif}; green regions: Eckhaus unstable; green lines: Eckhaus boundaries \eqref{e:egeh}; orange regions: zigzag unstable; orange lines: zigzag boundaries \eqref{e:egzz}; white regions: stable stripes. (a) $\beta=0$. (b) $\beta=0.7$. In (a) the zigzag boundary is attached to the origin, whereas in (b) the origin is stable, but advection shifts attachment point of the zigzag boundary to the right; $\M\neq\Id$ destabilises the stripes near $\kap=0$.}
\label{f:example}
\end{figure}

\subsection{Numerical example: extended Klausmeier model}\label{s:Klausmeier}

The extended Klausmeier in two space dimensions \cite{Klausmeier1999,Siero2015} is a two-component model for studying vegetation patterns on the earth's surface in drylands. In scaled form it is given by:
\begin{equation}\label{e:Klausmeier}
       \begin{aligned}
       u_t &= d\Delta u + \beta u_x+ a - u - uv^2,\\
       v_t &= \Delta v - mv + uv^2.
       \end{aligned}
\end{equation} The isotropic spread of (surface) water $u$ is modelled by $d\Delta u$, downhill flow by $\beta u_x$, precipitation by $a$ and evaporation by $-u$. The uptake of water by vegetation $\pm uv^2$ is quadratic in the vegetation to model enhanced water infiltration at locations with vegetation. Vegetation dispersal is modelled by $\Delta v$ and mortality by $-mv$. We fix the parameters to customary values $d=500$ and $m=0.45$ and investigate how (small) advection impacts the patterns by `brute force' computing them and their stability against large-wavelength instabilities with pde2path \cite{Uecker2014}. For this we choose $\beta=0$ or $50$ or $100$, which are relatively small values \cite{Siero2015}. The parameter $a$ is chosen so that the system is near Turing(-Hopf) instability.  

A spatially homogeneous steady state is given by $(u,v)=(a,0)$ and for $a\geq 2m$ there are two more:
\begin{equation*}
       \begin{aligned}
	       u_\pm(a) =\frac{2m^2}{a\pm\sqrt{a^2-4m^2}}, \quad v_\pm(a) =\frac{a\pm\sqrt{a^2-4m^2}}{2m}.
       \end{aligned}
\end{equation*} From these two, only $(u_+,v_+)$ is stable against spatially homogeneous perturbations, and becomes Turing(-Hopf) unstable when $a$ drops below a critical value. 

In Fig.~\ref{f:Klausmeier} the large-wavelength stability of stripes near onset is depicted. Contrary to the previous example, for increasing advection $\beta$ the zigzag boundary shifts to the left of the Turing(-Hopf) instability, so in this case the stripes with critical wavenumber that emerge are zigzag-stable.

\begin{figure}[h!]
\centering
	\subfloat[$\beta=0$]{\includegraphics[clip=true, trim=70 0 65 5, width=0.33\linewidth]{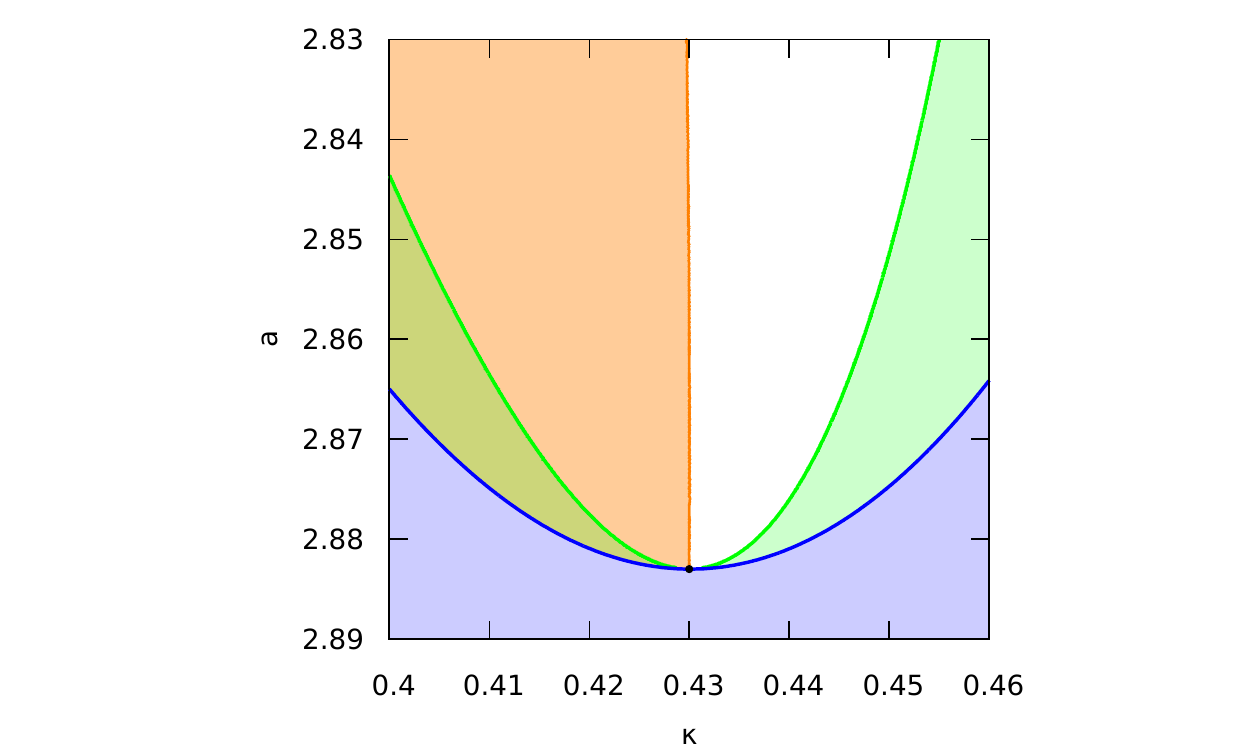}\label{f:Klausmeierbet0}}
	\subfloat[$\beta=50$]{\includegraphics[clip=true, trim=70 0 65 5, width=0.33\linewidth]{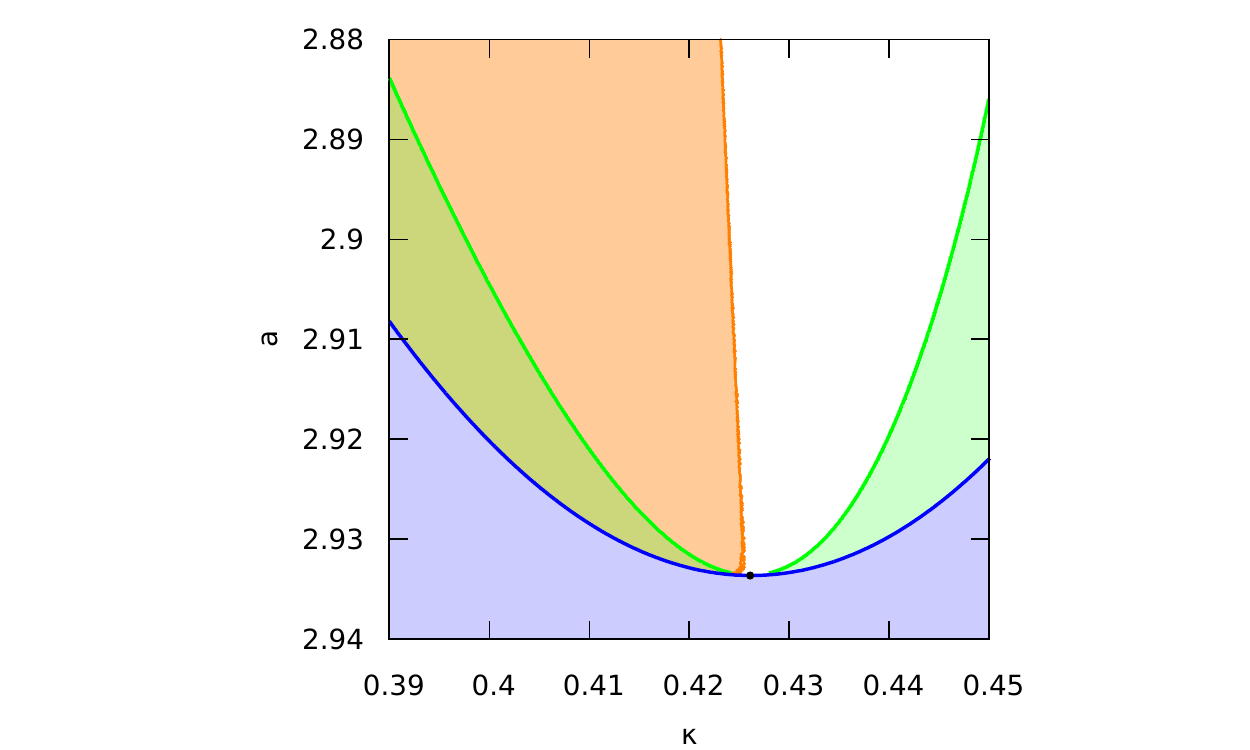}}
	\subfloat[$\beta=100$]{\includegraphics[clip=true, trim=70 0 65 5, width=0.33\linewidth]{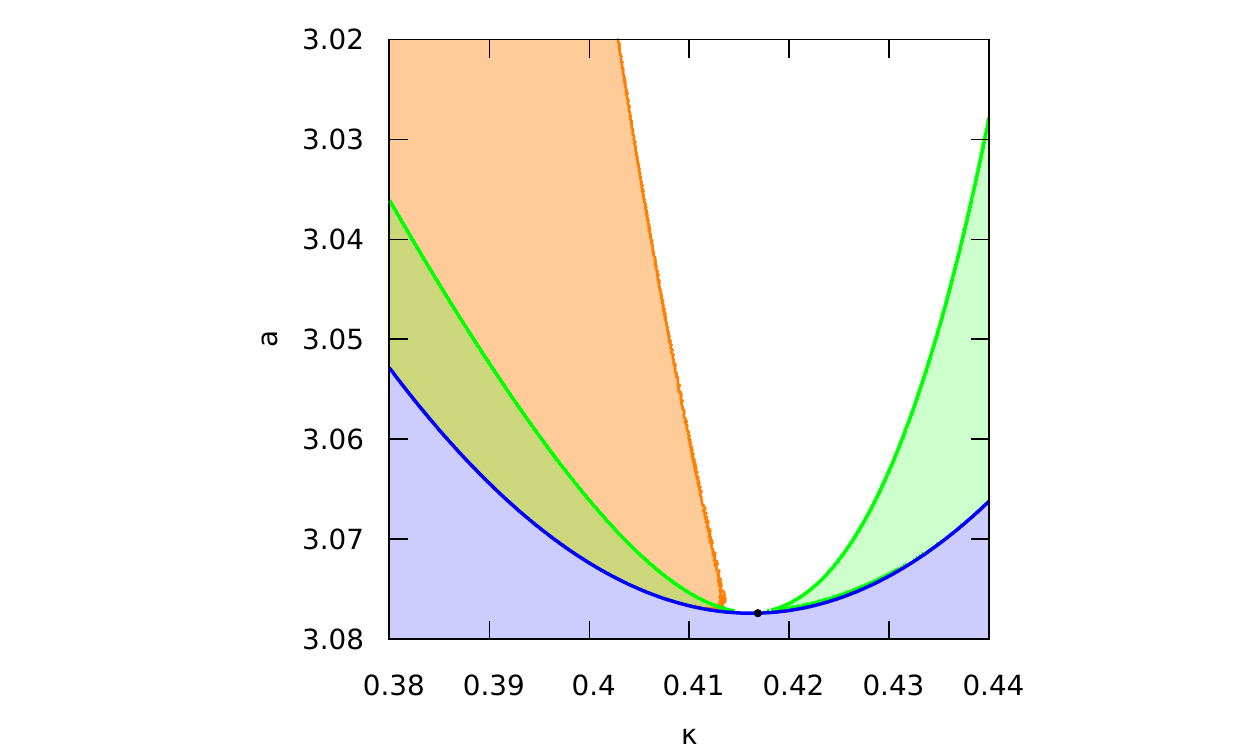}}
	\caption{Eckhaus and zigzag (in)stability regions of the stripes for \eqref{e:Klausmeier} in the $(\kappa,a)$-plane, by numerically checking the spectrum on an equidistant grid with spacing between neighboring grid points of $a=0.0001$ and $\kappa=0.0001$. Stripes exist in the complement of the blue region; green regions: Eckhaus unstable; orange regions: zigzag unstable. (a) $\beta=0$. (b) $\beta=50$. (c) $\beta=100$. In (a) the zigzag boundary is attached to the Turing instability locus and visually vertical, whereas in (b) and particularly (c) the zigzag boundary has shifted and tilted to the left.}\label{f:Klausmeier}
\end{figure}

In addition, we can transform \eqref{e:Klausmeier} into the framework of \eqref{e:RDS}. Since $(u_+,v_+)$ is a function of $a$, we consider $u=u_++\tilde u$, $v=v_++\tilde v$ such that the equilibrium is shifted to $(\tilde u,\tilde v) = (0,0)$. Removing the `tilde' yields
\begin{equation}
	\begin{aligned}
	u_t &= d\Delta u - (1+v_+^2)u - 2m v + \beta u_x - 2v_+ uv - u_+v^2 - uv^2,\\
	v_t &= \Delta v  + v_+^2 u + mv + 2v_+ uv + u_+v^2 +uv^2,
	\end{aligned}
\end{equation}
where the linear matrix, quadratic and cubic forms are given by
\[
\tilde\A = \begin{pmatrix}
-1-v_+^2 & -2m\\ v_+^2 & m
\end{pmatrix}, \; \Q[U,U]=\begin{pmatrix}
- 2v_+ uv - u_+v^2 \\  2v_+ uv + u_+v^2
\end{pmatrix},\; \K[U,U,U]=\begin{pmatrix}
-uv^2 \\ uv^2
\end{pmatrix},
\]
with $U:=(u,v)^T$.
In particular, the generic form of $\Q$ is given by $\Q[U_1,U_2] = (\Q_|[U_1,U_2],\Q_{||}[U_1,U_2])^T$ with
\[
-\Q_|[U_1,U_2] = \Q_{||}[U_1,U_2] = U_1^T \begin{pmatrix}
0 & v_+\\ v_+ & u_+
\end{pmatrix} U_2 ,
\]
where $U_j=(u_j,v_j)^T$, $j=1,2$. Since the Turing bifurcation occurs at $a=a_T$~\cite{Siero2015}, expanding $\tilde\A$ near $a=a_T$ yields
\[
\tilde\A = \tilde\A(a) = \tilde\A(a_T) + \partial_a\tilde\A(a_T)(a-a_T) + \calO((a-a_T)^2).
\]
In terms of the notations in \eqref{e:RDS}, we denote $\A:=\tilde\A(a_T)$, $\M:=\partial_a\tilde\A(a_T)$ and $\calpha:= a-a_T$. The higher order term $\calO(\calpha^2)$ is not relevant to the analysis in this paper, so we omit it. Therefore, we transform \eqref{e:Klausmeier} into the framework of \eqref{e:RDS}, and the analysis of \eqref{e:RDS} for `sufficiently small' $\calpha,\beta$ can be applied to \eqref{e:Klausmeier}. We list the leading order of the existence, zigzag and Eckhaus boundaries of the extended Klausmeier model \eqref{e:Klausmeier} for small $\mu$ as follows ($\alpha \approx -0.137\calpha$).
\begin{align*}
\text{bifurcation curve:}\quad & \alpha \approx -2.81\times 10^{-6}\beta^2+8.39\kap^2,\\
\text{Eckhaus boundary:}\quad & \alpha \approx -2.81\times 10^{-6}\beta^2+2.80\kap^2,\\
\text{zigzag boundary:}\quad & \alpha \approx -2.38\times10^{-5}\beta^2-3.09\kap.
\end{align*}
The advection shifts the bifurcation curve and the Eckhaus boundary downwards, and shifts the zigzag boundary to the left. Notably, the zigzag boundary has a negative slope and it is true for $\beta=0$ as well. This can be seen from Fig.~\ref{f:Klausmeierbet0} that the zigzag boundary is not precisely vertical.

\bigskip
\appendix

\section{Proof of Theorem \ref{t:bif}}\label{s:bifproof}
Writing the nonlinear part as $F(u) := \Q[u,u] + \K[u,u,u]$ we seek wave trains as steady states, i.e. solutions to
\begin{align}\label{e:RDSper}
\Phi(u,\mu):=\calL_\mu u + F(u) = 0
\end{align}
with $\Phi: (\Hspace^2_\per)^2 \times \Lambda \to (\Lspace^2)^2$ on the Sobolev- and Lebesgue-spaces $(\Hspace^2_\per)^2$ to $(\Lspace^2)^2$ with normalised inner product $\langle u,v\rangle_{\Lspace^2} = \frac 1{2\pi}\int_0^{2\pi}u\overline{v}\rmd x$. It is well-known that the realisation $\calL_\mu:(\Hspace^2_\per)^2 \to (\Lspace^2)^2$ is a bounded Fredholm operator with index zero. Therefore, all solutions to \eqref{e:RDSper} which bifurcate from $\mu=0$ can be fully determined by Lyapunov-Schmidt reduction. 

By assumption, $\calL_0$ has a two-dimensional kernel spanned by $e_0(x)=\E_0\rme^{\rmi x}$ and its complex conjugate, where $\E_0$ is the eigenvector in the kernel of $\hat\calL_0=-\kcsq D + L$. Let $\calL_0^*$ be the adjoint operator of $\calL_0$ equipped with inner product $\langle \cdot,\cdot\rangle_{\Lspace^2}$, and thus $\calL_0^*$ has a kernel spanned by $e_0^*(x) = \E_0^*\rme^{\rmi x}$ and its complex conjugate. Having in mind the scaled inner products, we choose the normalisation $\langle e_0, e_0^* \rangle = 1$ and $\langle e_0, e_0 \rangle = 1$, i.e., $\langle E_0, E_0^* \rangle = 1$ and $\langle E_0, E_0 \rangle =1$ (cf. Remark \ref{r:ev}).

By Fredholm properties there exists closed subspaces $X\subset(\Hspace^2_\per)^2$ and $Y\subset(\Lspace^2)^2$ such that
\begin{align*}
	(\Hspace^2_\per)^2 & = \ker \calL_0 \oplus X, \;
	(\Lspace^2)^2  = Y \oplus \range \calL_0.
\end{align*}
Hence for each $u \in (\Hspace^2_\per)^2$, there exists unique $v\in \ker\calL_0$ and $w\in X$ such that $u=v+w$.
With the projection $P_h: (\Lspace^2)^2 \to \range \calL_0$, equation \eqref{e:RDSper} is equivalent to the system
\begin{align}
	P_h \Phi(v + w, \mu) & = 0, \label{e:proj1}\\
	(\Id - P_h) \Phi(v + w, \mu) & = 0. \label{e:proj2}
\end{align}
Differentiating \eqref{e:proj1} with respect to $w$ at $(0,0)$ gives $P_h \partial_u \Phi(0,0) = P_h \calL_0 = \calL_0|_X: X \to \range \calL_0$ as a boundedly invertible operator. Hence, for given $v$ \eqref{e:proj1} can be solved by the implicit function theorem in terms of a smooth function $W:\ker \calL_0\times\Lambda\to Y$ with $W(0,0)=0$, $\partial_v W(0,0)=0$ as
\begin{align}\label{e:IFTsol}
	w = W(v,\mu),
\end{align}
satisfying $P_h\Phi(v+W(v,\mu),\mu)=0$. Substituting \eqref{e:IFTsol} into \eqref{e:proj2} yields the bifurcation equation
\begin{align*}
	\phi(v,\mu) := (\Id - P_h) \Phi(v + W(v,\mu), \mu) =0,
\end{align*}
with $\phi:\ker\calL_0\times\Lambda \rightarrow Y$.
Since $\range\calL_0\cap \ker \calL_0=\{0\}$ and we are in Hilbert spaces, we can choose 
\[
X=\range\calL_0 \cap (\Hspace^2_\per)^2, \quad Y = \ker \calL_0^* = (\range\calL_0)^\perp,
\]
where the adjoint $\calL_0^*$ has a kernel spanned by $e_0^*(x)= \E_0^* \rme^{\rmi x}$ and its complex conjugate. Hence, it is natural to write the projection as $P_h=\Id- P$ with
\[
Pu = \langle u, e_0^*\rangle e_0 + \langle u, \overline{e_0^*}\rangle \overline{e_0},
\]
which equally is a projection for the splitting $u=v+w$, when constrained to $(\Hspace^2_\per)^2$. With some abuse, we use the same notation for inner products in $\Lspace^2$ and $\C^2$ as it is clear from the context what is meant. Note that $\langle P\Phi, e_0^* \rangle = \langle \Phi, e_0^* \rangle$ since $\langle P_h\Phi, e_0^* \rangle=0$ for solutions. 

Writing $v=Ae_0 + \overline{Ae_0}$ the bifurcation equation can be cast as
\begin{align}\label{e:bifeq}
g(A,\overline{A},\mu):= \langle \phi(Ae_0 + \overline{Ae_0},\mu), e_0^*\rangle = 0
\end{align}
with $g:\C\times\C\times\Lambda\rightarrow\R$ which we next expand in order to expand solutions.
Using \eqref{e:RDSper} and $P\calL_0=0$ gives 
\begin{align}\label{e:gsplit}
g(A,\overline{A},\mu) = \langle(\calL_\mu-\calL_0)v,e_0^*\rangle + \langle(\calL_\mu-\calL_0)W,e_0^*\rangle 
+ \langle F(v+W),e_0^*\rangle
\end{align}
Let us first consider the last term that includes $F$. While $\E_0$, $\E_0^*$ are real, we show the complex conjugate to highlight the origin of terms. It is a priori clear from the construction  that  $W=\calO(|v||\mu|) =\calO(|A||\mu|)$, cf.\ \eqref{e:Wexp} for the details a posteriori. For $u=Ae_0 + \overline{Ae_0}+W$ we then readily compute
\begin{align}\label{e:Kexp}
\langle K[u,u,u],e_0^*\rangle = 3A|A|^2 \hK + \calO(|A|^3(|\mu|+A^2)),
\end{align}
where $\hK=\langle \K[\E_0,\E_0,\overline{\E_0}], \E_0^*\rangle$, and we used that orthogonality of Fourier modes removes even powers of $v$, i.e., even powers of $A$.

The more involved $\langle Q[u,u],e_0^*\rangle$ analogously gives 
\begin{align}\label{e:WQ}
2A\langle Q[e_0,W],e_0^*\rangle + 2A\langle Q[\overline{e_0},W], e_0^* \rangle + \calO(A^2(|\mu|^2+A^2)),
\end{align}
which requires expanding $W=W(A,\overline{A},\mu)$ through \eqref{e:proj1}, i.e., the fixed point equation $P_h\calL_\mu W=G(W,A,\overline{A},\mu)$ with
\begin{equation}\label{e:GW}
G(W,A,\overline{A},\mu) := -P_h F(v+W) - P_h (\calL_\mu - \calL_0) v,
\end{equation}
where  $v=Ae_0 + \overline{Ae_0}$ and
\[
\calL_\mu - \calL_0 = (2 \kc \kap + \kap^2)D\partial_x^2 + \calpha M + \beta(\kc+\kap) \B\partial_x.
\]
Using $\partial_v G(0)=\partial_W G(0)$ this expansion gives
$\partial_AW(0) = \partial_{\overline{A}}W(0) = 0$ and, cf.\ \eqref{e:defs},
\begin{alignat*}{3}
	&\partial_{AA}W(0) &&= -2(-4 \kcsq D + \A)^{-1} \Q[\E_0,\E_0]=\hQ, \\
	&\partial_{A\overline{A}}W(0) &&= -2\A^{-1} \Q[\overline{\E_0},\E_0]=\tQ,\\
	&\calL_0\partial_{A\calpha} W(0) &&= -P_h  M e_0,\\
	&\calL_0\partial_{A\beta} W(0) &&=-\kc P_h \B \partial_x e_0 \;=-\rmi\kc P_h \B e_0,\\ 
	&\calL_0\partial_{A\kap} W(0) &&= -2\kc P_h D\partial_x^2e_0 \;= 2\kc P_h D e_0,\\
	&\calL_0\partial_{A\beta\beta} W(0) &&=-2\kc P_h \B \partial_x \partial_{A\beta}W(0),
\end{alignat*}
so that, looking at the Fourier modes,
\begin{align*}
	\partial_{A\beta} W(0) = \rmi w_{A\beta}\rme^{\rmi x},\,
	\partial_{A\kap} W(0) = w_{A\kap}\rme^{\rmi x}, \,
	\partial_{A\calpha} W(0) = w_{A\calpha}\rme^{\rmi x},\,
	\partial_{A\beta\beta} W(0) = w_{A\beta\beta}\rme^{\rmi x}.
\end{align*}
Furthermore $w_{A\beta}, w_{A\kap}$ and $w_{A\calpha}$ satisfy, cf.\ \eqref{e:defs}, 
\begin{alignat*}{3}
	&(-\kcsq D+\A)w_{A\calpha}&& = (\langle M E_0, E_0^*\rangle- M) \E_0,\\
	&(-\kcsq D+\A) w_{A\beta} &&= \kc(\langle \B\E_0,\E_0^*\rangle -\B)\E_0,\\
	&(-\kcsq D+\A)w_{A\kap}&& =  2\kc D \E_0,\\
	&(-\kcsq D+\A) w_{A\beta\beta}&&= 2\kc (\B w_{A\beta} - \langle \B w_{A\beta},E_0^*\rangle\E_0),
\end{alignat*}
where we used $\langle D \E_0, \E_0^* \rangle = 0$, which follows from a direct computation with the conditions in Remark~\ref{r:Turing}. Note that for $M={\rm Id}$ we have $w_{A\calpha}=0$ and in fact $W$ is independent of $\calpha$.

Assembling terms, we obtain
\begin{equation}\label{e:Wexp}
\begin{aligned}
W(A,\overline{A},\mu) =\ & \rmi\beta w_{A\beta} (A \rme^{\rmi x} - \overline{A}\rme^{-\rmi x})\\
&+ (\kap w_{A\kap} + \calpha w_{A\calpha} + \beta^2 w_{A\beta\beta})(A \rme^{\rmi x} + \overline{A}\rme^{-\rmi x})\\
&+ \frac{1}{2} \hQ\left( A^2 \rme^{2 \rmi x} + \overline{A}^2 \rme^{-2 \rmi x}\right) + A\overline{A}\tQ+ \calR,
\end{aligned}
\end{equation}
where $\calR=\calO(|A|(A^2 + \kap^2 + |\beta\kap| + |\beta|^3  + a_M\calpha^2))$; recall $a_M=0$ if $M={\rm Id}$ and $a_M=1$ otherwise. Notably, the terms of order $|A\beta\kap|$ are not relevant to the large-wavelength stability of stripes, so we put them in the remainder.
By translation symmetry we can shift $x$ to $x+a$, which gives $A$ replaced by $A\rme^{\rmi a}$ so that without loss of generality $A$ is real. This gives \eqref{e:Stripes}.

\medskip
The bifurcation equation~\eqref{e:bifeq} gives \eqref{e:stripeeqn} through its real part divided by $A$. The velocity equation \eqref{e:stripev} stems from rearranging the imaginary part divided by $\beta A$. The latter is natural since imaginary terms arise from odd powers of $\partial_x$, which come with odd power of $\beta$.
In order to separate resolved parts, that will be leading order for later purposes, and remainder terms in \eqref{e:stripeeqn}, \eqref{e:stripev}, we substitute \eqref{e:Wexp} into \eqref{e:gsplit}, where the third summand is \eqref{e:Kexp} plus \eqref{e:WQ}. 
In \eqref{e:WQ} only Fourier modes $e^{i r x}$ of $W$ with $r=0$ or $r=2$ are nonzero. The case $r=0$ stems only from products that have $A^j\overline{A}^j$, $j\geq 2$ as the order in $A$ since $j=0,1$ are resolved terms; the case $r=2$ has $A^{j+2}\overline{A}^j$, $j\geq 1$. Hence, terms in \eqref{e:WQ} that stem from $\calR$ are order $A^4$; from \eqref{e:Kexp} this is order $A^5$. The second summand of \eqref{e:gsplit} is nonzero for linear terms in $A$ only, which contribute to higher order terms in $\tlam(\mu)$ as discussed below. Hence, the relevant remainder term from \eqref{e:gsplit} is order $A^4$.

The remainder term in \eqref{e:stripeeqn} and \eqref{e:stripev} has this order divided by $A$, i.e., $\calO(A^3)$. This is also the order of the contribution of $\calR$ to the remainder term in \eqref{e:stripev}; here we note that real parts turn imaginary in \eqref{e:gsplit} only through application of $\beta \kc \B\partial_x$ thus gaining a power of $\beta$. 

\medskip
The part of \eqref{e:bifeq} that is resolved in \eqref{e:stripeeqn}, \eqref{e:stripev} arises upon substituting the resolved terms of \eqref{e:Wexp} into \eqref{e:WQ}, and further into \eqref{e:gsplit}; here \eqref{e:Kexp} directly enters. Noting cancellation due to the Fourier modes and dividing out the trivial solution $A=0$ we obtain
\begin{equation}\label{e:bifeqn}
	\partial_A g(0;\mu) + \rho_\nl A^2,
\end{equation}
and its complex conjugate.  On the one hand, for $F=0$ the bifurcation equation is linear in $A$ and determines when $\calL_\mu$ has a kernel, which means there is a smooth function $r(\mu)$ such that
\[
\partial_A g(0;\mu)= \tlam(\mu) = r(\mu)\lambda_\mu,
\]
with $\lambda_\mu$ the critical eigenvalue from \eqref{e:evlinearop} and $r(0)\neq 0$. Expanding $r$ we thus have
\[
\tlam(\mu) = r(0)\lambda_\mu + \calR_3, \; \calO(\calR_3) = \calO(|\mu|\, |\lambda_\mu|)
\]
and we can determine the expansion of $\tlam$ from 
\begin{align*}
\partial_A g(0;\mu) &= \langle (\calL_\mu-\calL_0)(\Id+\partial_{\mu}\partial_v W\mu+ \calO(|\mu|^2))e_0,e_0^*\rangle\\
&= \langle(\calL_\mu-\calL_0)e_0,e_0^*\rangle + \langle(\calL_\mu-\calL_0)\partial_{\mu}\partial_v W\mu+ \calO(|\mu|^2))e_0,e_0^*\rangle.
\end{align*}
In particular, $\partial_{\calpha} \tlam(0) = \langle M E_0, E_0^*\rangle$ which equals $\lambda_\M$ by a direct computation so that $r(0)=1$. Moreover, the real part of \eqref{e:bifeqn} gives \eqref{e:stripeeqn} and solving the imaginary part divided by $\beta$ for $c$ gives \eqref{e:stripev} when including the remainder terms discussed before. In particular, $r(0)=1$ yields \eqref{e:stripecoeffalt} by comparing the other coefficients, and $\langle BE_0, E_0^*\rangle|_{\mu=0}=0$.

\bigskip
\appendixnotitle

\subsection{Proof of Theorem \ref{t:zigzag}}\label{s:zigzagproof}
From \eqref{e:seconddifell} the critical spectrum is given by
\[
\lambda_{\rm zz} = -\kappa^2\langle DV_0, V_0^* \rangle \ell^2 + \calO(\ell^4).
\]
We may choose $V_0 =\partial_xU_\rms = \calO(|A|)$. Expanding $\calT_0^* V_0^*=0$ analogous to the computation of $U_\rms$ gives
\begin{align*}
	V_0 =\ &-2A \big((\E_0 + \kap w_{A\kap}+\calpha w_{A\calpha}+\beta^2w_{A\beta\beta})\sin(x) + \beta w_{A\beta}\cos(x)\\
	&\qquad\ \ + A\hQ\sin(2x) + \calO(\calR/|A|)\big),\\
	V_0^* =\ &-A^*\big((\E_0^* + \kap w_{A\kap}^*+\calpha w_{A\calpha}^*+\beta^2w_{A\beta\beta}^*)\sin(x) - \beta w_{A\beta}^*\cos(x)\\
	&\qquad\ \ + A\hQ^*\sin(2x) + \calO(\calR/|A|)\big),
\end{align*}
where 
\begin{align*}
	w_{A\kap}^* &= 2\kc(-\kcsq D+\A^T)^{-1} D\E_0^*,\\
	w_{A\beta}^* &= \kc(-\kcsq D+\A^T)^{-1} (\langle \B\E_0^*,\E_0 \rangle - \B)\E_0^*,\\
	w_{A\calpha}^* &= (-\kcsq D+\A^T)^{-1} (\langle \M^T E_0^*, E_0\rangle- \M^T) \E_0^*,\\
	w_{A\beta\beta}^* &= 2\kc(-\kcsq D+\A^T)^{-1} (\B w_{A\beta}^* - \langle \B w_{A\beta}^*,E_0\rangle\E_0^*),\\
	\hQ^* &= -2(-4\kcsq D+\A^T)^{-1}\Q[\E_0,\cdot]^T\E_0^*.
\end{align*}
The normalised coefficient $A^*$ is such that $\langle V_0,V_0^* \rangle = 1$, which implies $A^*=\calO(|A|^{-1})$ and $AA^*=1$ in the limit $\mu\to 0$ since $\langle V_0,V_0^* \rangle|_{\mu=0} =AA^*|_{\mu=0} \langle \E_0,\E_0^* \rangle =1$. By straightforward calculation and using $\langle D\E_0,\E_0^* \rangle =0$, we have
\begin{align}
	\frac{1}{AA^*}\langle DV_0, V_0^* \rangle =\ &\kap(\langle D\E_0, w_{A\kap}^* \rangle + \langle Dw_{A\kap}, \E_0^* \rangle)\nonumber\\
	&+ \beta^2(\langle D\E_0,w_{A\beta\beta}^*\rangle +\langle Dw_{A\beta\beta},E_0^*\rangle-\langle Dw_{A\beta}, w_{A\beta}^* \rangle)\nonumber\\
	&+ \calpha(\langle D\E_0,w_{A\calpha}^*\rangle + \langle D w_{A\calpha},\E_0^*\rangle)\nonumber\\
	&+ A^2\langle D\Q_2,\Q_2^* \rangle+ \calO(\calR/|A|)\nonumber\\
	=\ &-\frac{\rho_\kap}{\kc}\kap  +\tilde\rho_{\bbeta} \beta^2 + \tilde\rho_\calpha a_\M\alpha + \tilde q_{22}A^2 + \calO(\calR/|A|),\label{e:seconddifzigzag}
\end{align}
where $\langle Dw_{A\kap}, \E_0^* \rangle = \langle D\E_0, w_{A\kap}^* \rangle = -\rho_\kap/(2\kc)$. Upon substitution into $\lambda_\zz$, expansion of $\kappa$ and using the leading order of \eqref{e:stripeeqn} yields the claimed result.

\subsection{Proof of Theorem \ref{t:Eckhaus}}\label{s:Eckhausproof}
The critical spectrum is given by 
\[
\lambda_\eh = (\partial_\gamma\lambda)_0 \gamma + \frac{1}{2}(\partial_\gamma^2\lambda)_0\gamma^2 + \calO(|\gamma|^3).
\]
We first compute $(\partial_\gamma\lambda)_0$, i.e., the terms in \eqref{e:gamfirstdiff} and \eqref{e:seconddigfam}. Differentiating
\[
\calL_\mu U_\rms + \Q[U_\rms,U_\rms] + \K[U_\rms,U_\rms,U_\rms] = 0
\]
with respect to $\kap$ and rearranging terms yields
\begin{align}
	\calT_0 \partial_{\kap} U_\rms = - 2(2\kappa D\partial_x + \beta\B + \beta\kappa\partial_{\kap}\B)V_0.\label{e:dkapUs}
\end{align}
Hence, we can solve for $\partial_{\kap}U_\rms$ if and only if 
\[
\langle (2\kappa D\partial_x + \beta\B + \beta\kappa\partial_{\kap}\B)V_0, V_0^* \rangle = 0,
\]
where $\partial_{\kap}\B = \partial_{\kap}c\cdot\Id = (\lambda_{\beta}-\lambda_{\kap\beta})/\kc\cdot\Id$, cf. \eqref{e:stripev}, so that from \eqref{e:gamfirstdiff} we have 
\begin{align}\label{e:lamgam}
	(\partial_\gamma\lambda)_0 = -\rmi\kappa\langle \beta\kappa\partial_{\kap}\B V_0,V_0^* \rangle = -\rmi\kappa^2\beta\partial_{\kap}c = \rmi\kappa^2\beta\frac{\lambda_{\kap\beta}-\lambda_{\beta}}{\kc}, 
\end{align}
and the leading order gives the imaginary part of the claimed spectrum.

\begin{remark}\label{r:groupv}
	As in \cite{Doelman2009} $(\partial_\gamma\lambda)_0$ measures the correction of the phase velocity $c$ to the group velocity $c_{\rm g}$. Let $\omega(\kappa)$ denote the nonlinear dispersion relation so that
	\[
	c = \frac{\omega(\kappa)}{\kappa},\quad c_{\rm g} = \frac{\dif \omega(\kappa)}{\dif\kappa}.
	\]
	Differentiating $c$ with respect to $\kap$ gives
	\[
	\partial_{\kap}c = \frac{1}{\kappa}\frac{\dif \omega(\kappa)}{\dif\kappa} - \frac{\omega(\kappa)}{\kappa^2} = \frac{c_{\rm g} - c}{\kappa},
	\]
	and substituting into \eqref{e:lamgam}, yields
	\[
	(\partial_\gamma\lambda)_0 = \rmi\kappa\beta(c - c_{\rm g}).
	\]
	Hence, for $\beta\neq0$, $0\leq |\kap|\ll 1$, we have $(\partial_\gamma\lambda)_0=0 \Leftrightarrow c=c_{\rm g}$.
\end{remark}

\medskip
Next, we consider $(\partial_\gamma V)_0$. Due to \eqref{e:firstdiff}, \eqref{e:Tgam} and \eqref{e:dkapUs} we have
\begin{align*}
	\calT_0(\partial_\gamma V)_0 &= (\partial_\gamma\lambda)_0V_0 - (\partial_\gamma \calT)_0 V_0 = (-\rmi\kappa^2\beta\partial_{\kap}c - 2\rmi\kappa^2 D\partial_x - \rmi\kappa\beta\B)V_0\\
	&= \rmi\kappa\calT_0 \partial_{\kap} U_\rms = \calT_0 (\rmi\kappa\partial_{\kap} U_\rms),
\end{align*}
which implies that $(\partial_\gamma V)_0 - \rmi\kappa\partial_{\kap}U_\rms$ lies in the kernel of $\calT_0$, spanned by $V_0$, and there is $a\in\C$ such that $(\partial_\gamma V)_0 = \rmi\kappa\partial_{\kap}U_\rms + aV_0$.

This term is not relevant for $(\partial_\gamma^2\lambda)_0$ since we compute
\begin{align*}
	(\partial_\gamma^2\lambda)_0 =\ & \langle (\partial^2_\gamma\calT)_0V_0,V_0^*\rangle - 2\langle ((\partial_\gamma\lambda)_0 - (\partial_\gamma\calT)_0)(\partial_\gamma V)_0, V_0^* \rangle\\
	=\ & -2\kappa^2\langle DV_0,V_0^* \rangle - 2\langle ((\partial_\gamma\lambda)_0 - (\partial_\gamma\calT)_0)(\rmi\kappa\partial_{\kap}U_\rms + aV_0), V_0^* \rangle\\
	=\ & -2\kappa^2\langle DV_0,V_0^* \rangle - 2\langle ((\partial_\gamma\lambda)_0 - (\partial_\gamma\calT)_0)(\rmi\kappa\partial_{\kap}U_\rms), V_0^* \rangle
\end{align*}
using  \eqref{e:firstdif} in the third equality. Upon substituting \eqref{e:lamgam} and 
\eqref{e:Tgam} we obtain 
\begin{align}
	(\partial_\gamma^2\lambda)_0 =
	\ &- 2\kappa^2\langle 2\kappa D\partial_x \partial_{\kap}U_\rms, V_0^*\rangle \label{e:seconddiff}\\
	& -2\kappa^2\langle DV_0,V_0^* \rangle + 2\beta\frac{\kappa^3}{\kc}(\lambda_{\kap\beta}-\lambda_{\beta})\langle \partial_{\kap}U_\rms, V_0^* \rangle - 2\kappa^2\beta\langle \B\partial_{\kap}U_\rms, V_0^* \rangle,\label{e:seconddiffhot}	
\end{align}
and we will show that \eqref{e:seconddiff} is leading order. 
By \eqref{e:seconddifzigzag} the first term in \eqref{e:seconddiffhot} is order $\calO(|a_\M\alpha| + |\kap| + \beta^2+A^2)$, and we show the others are $\calO(A^{-2}\beta^2(|\kap|+|a_\M\alpha|)+\beta^2)$.

Differentiating $U_\rms$ with respect to $\kap$ gives
\begin{align*}
	\partial_{\kap}U_\rms =\ & \partial_{\kap}A\left( 2(\E_0+\kap w_{A\kap} + \calpha w_{A\calpha} + \beta^2 w_{A\beta\beta})\cos(x) - 2\beta w_{A\beta}\sin(x)\right)\\ 
	&+ 2A w_{A\kap}\cos(x)+ 2A\partial_{\kap}A\Q_2\cos(2x) + 2A\partial_\kap A\Q_0 + \partial_{\kap}\calR,
\end{align*}
with $\partial_{\kap}\calR=\calO(|A|(|\beta|+|\kap|)+|\partial_{\kap}A|\calR/|A|)$ by differentiating the smooth remainder in \eqref{e:Stripes}. In the following we frequently omit remainder terms such as $\calR$ as the order of the remainder terms do not change and we are only interested in the resolved terms, which will be higher order in the application of the result. From Theorem \ref{t:bif}, 
\begin{align}\label{e:Akap}
A_{\kap} := \partial_{\kap}A = -\frac{2\rho_\kap\kap+\lambda_{\M\kap}a_\M\alpha}{2\rho_\nl A}= \calO(|A|^{-1}(|\kap|+|a_\M\alpha|)),
\end{align}
which means 
\begin{align*}
	\langle \partial_{\kap}U_\rms, V_0^* \rangle =\ &A_{\kap}A^*\beta(\langle w_{A\beta}, \E_0^* \rangle + \langle \E_0, w_{A\beta}^* \rangle) + AA^*\beta \langle w_{A\kap}, w_{A\beta}^* \rangle \\
	\langle \B\partial_{\kap}U_\rms, V_0^* \rangle =\ &A_{\kap}A^*\beta(\langle \B w_{A\beta}, \E_0^* \rangle + \langle \B\E_0, w_{A\beta}^* \rangle) + AA^*\beta \langle \B w_{A\kap}, w_{A\beta}^* \rangle,
\end{align*}
so that \eqref{e:seconddiffhot} is of order $\calO((1+A^{-2}\beta^2)(|\kap|+|a_\M\alpha|)+\beta^2 +A^2)$.

As to \eqref{e:seconddiff}, differentiating $\partial_{\kap}U_\rms$ with respect to $x$ gives (to leading order)
\begin{align*}
	\partial_x\partial_{\kap}U_\rms =\ & -2A_\kap\left( (\E_0+\kap w_{A\kap} + \calpha w_{A\calpha} + \beta^2 w_{A\beta\beta})\sin(x) + \beta w_{A\beta}\cos(x)\right)\\ 
	&- 2A w_{A\kap}\sin(x) - 4AA_\kap\Q_2\sin(2x),
\end{align*}
thus we have
\begin{align}
	\langle D\partial_x\partial_{\kap}U_\rms, V_0^* \rangle =\ &(A_{\kap}A^*\kap + AA^*)\langle Dw_{A\kap}, \E_0^* \rangle + A_{\kap}A^*\kap\langle D\E_0, w_{A\kap}^* \rangle\nonumber\\
	&+ A_\kap A^*\calpha(\langle D w_{A\calpha},\E_0^* \rangle + \langle D\E_0,w_{A\calpha}^*\rangle)\nonumber\\
	&+ A_\kap A^*\beta^2(\langle D w_{A\beta\beta},\E_0^* \rangle+\langle D \E_0,w_{A\beta\beta}^* \rangle-\langle D w_{A\beta},w_{A\beta}^* \rangle)\nonumber\\
	&+ 2A^2A_\kap A^*\langle D\Q_2,\Q_2^*\rangle\nonumber\\
	=\ &(2A_{\kap}A^{-1}\kap + 1)\langle Dw_{A\kap}, \E_0^* \rangle + A_\kap A^{-1}(\rho_\calpha a_\M\alpha + \rho_{\bbeta}\beta^2)\label{e:seconddiflead}\\
	& + 2A^2A_\kap A^*\langle D\Q_2,\Q_2^*\rangle\nonumber.
\end{align}
Since $A^2A_\kap A^* = \calO(|\kap|+|a_\M\alpha|)$, it is a higher order term compared to $\langle Dw_{A\kap}, \E_0^* \rangle=\calO(1)$. 
Substituting \eqref{e:Akap}, \eqref{e:defs} and \eqref{e:stripeeqn}, \eqref{e:seconddiflead} becomes
\begin{align*}
	\langle D\partial_x\partial_{\kap}U_\rms, V_0^* \rangle =\ &\frac{\rho_\kap}{2\kc\rho_\nl}A^{-2}\left(\alpha + \rho_\beta\beta^2 + 3\rho_\kap\kap^2\right)\\
	 &+ \calO\left(A^{-2}(|a_\M\alpha|+|\kap|)(a_\M\alpha+\beta^2)+|a_\M\alpha|+|\kap|\right)
\end{align*}
Altogether, using $\kappa=\kc+\kap$ we have, omitting the refinement when $\M=\Id$,
\begin{align}
	(\partial_\gamma^2\lambda)_0 =&\ -4\kappa^3\langle D\partial_x(\partial_{\kap}U_\rms), V_0^* \rangle\\
	& + \calO(A^{-2}(|\alpha|+|\kap|)(|\alpha|+\beta^2)+|\alpha|+|\kap|+\beta^2+A^2)\nonumber\\
	=&\ -2\kcsq\frac{\rho_\kap}{\rho_\nl}A^{-2}\left(\alpha+\rho_\beta\beta^2+3\rho_\kap\kap^2 + \calR_\eh\right)\label{e:seconddiflam}
\end{align}
which is as claimed and has remainder term
\begin{align}\label{e:ehhot}
\calR_\eh=\calO\left(|\alpha\kap| + \alpha^2+|\alpha|\beta^2 + |\kap|\beta^2+|\kap|^3 + A^2(|\kap|+|\alpha|+\beta^2+A^2)\right).
\end{align}
Note that $A^2|\kap|$ is higher order compared to $\alpha+\rho_\beta\beta^2+3\rho_\kap\kap^2$ due to \eqref{e:ampfull},
and $|\alpha\kap|$ is higher order since $\alpha$ behaves quadratically for any balanced order between $\alpha,\beta^2, \kap^2$ which makes $|\alpha\kap|$ cubic order.

\bigskip
\section*{Acknowledgements}
J.Y. is grateful for the hospitality and support from Faculty 3 -- Mathematics, University of Bremen as well as travel support through an Impulse Grants for Research Projects by University of Bremen.


\end{document}